
\documentclass{lms}

\usepackage{url}
\usepackage{graphicx}
\usepackage{amssymb, amscd, amsmath}
\usepackage{xy}
\usepackage{bm}
\xyoption{all}
\usepackage[linktocpage]{hyperref}
\usepackage[dvipsnames]{xcolor}
\usepackage{outline}
\usepackage{epstopdf}
\usepackage{rotating}
\usepackage{mathabx}

\newtheorem{thm}{Theorem} 

\newtheorem{theorem}{Theorem}[section] 
\newtheorem{lemma}[theorem]{Lemma}
\newtheorem{proposition}[theorem]{Proposition}
\newtheorem{corollary}[theorem]{Corollary}
\newtheorem{assertion}[theorem]{Assertion}

\newtheorem{extensionlemma}[theorem]{Extension Lemma}
\newnumbered{definition}{Definition} 
\newnumbered{remark}{Remark}

\newcommand{\fig}[3]{\begin{figure}[h!] \includegraphics[height=#1pt]{#2}#3\end{figure}}

\newcommand{\figref}[1]{Figure~\ref{F:#1}}
\newcommand{\secref}[1]{Section~\ref{S:#1}}
\newcommand{\thmref}[1]{Theorem~\ref{T:#1}}
\newcommand{\lemref}[1]{Lemma~\ref{L:#1}}

\newcommand{\aref}[1]{Assertion~\ref{A:#1}}

\newcommand{\bz}{\mathbb Z}

\newcommand{\br}{\mathbb R}
\newcommand{\bc}{\mathbb C}
\newcommand{\bx}{\mathbb X}

\newcommand{\bw}{\mathbb W}
\newcommand{\bp}{\mathbb P}

\newcommand{\calc}{\mathcal C}
\newcommand{\calf}{\mathcal F}

\newcommand{\cs}{\mathbin{\#}} 
\newcommand{\fcs}[2]{\,_{#1}\hskip-2.5pt\cs_{#2}}
\newcommand{\ol}[1]{\overline{#1}}
\newcommand{\st}{\,\vert\,}

\newcommand{\lto}{\longrightarrow}

\newcommand{\toself}{\ \begin{sideways}$\circlearrowright$\end{sideways}\,}

\newcommand{\id}{1\!\!1}
\newcommand{\idsss}{\rm{id}_{\sss}}

\newcommand{\idm}{\textup{id}_{M}}
\newcommand{\idx}{\textup{id}_{X}}
\newcommand{\idmprime}{\textup{id}_{M'}}

\newcommand{\sss}{{S^2\hskip-2pt\times\hskip-2pt S^2}}
\newcommand{\sst}{S^2\widetilde{\times}S^2}
\newcommand{\cpone}{\bc\textup{P}^1}
\newcommand{\cptwo}{\bc\textup{P}^2}
\newcommand{\cptwobar}{\overline{\bc\textup{P}}\,\!^2}
\newcommand{\sh}{S_h}
\newcommand{\tsh}{T_h}
\newcommand{\ch}{C_h}
\newcommand{\chbar}{\ol C_h}
\newcommand{\cbar}{\ol C}
\newcommand{\kh}{K_h}
\newcommand{\khbar}{\ol K_h}
\newcommand{\rh}{\br_h}
\newcommand{\ah}{A_h}
\newcommand{\bh}{B_h}
\newcommand{\xtwo}{E(2)^{-\tau}} 
\newcommand{\xn}{E(n)^{-\tau}} 
\newcommand{\diff}{{\rm\bf Diff}}

\newcommand{\emb}{{\rm\bf Emb}}
\newcommand{\halfD}{D_0}
\newcommand{\xpq}{\bx_{p,q}}
\newcommand{\gfs}{g_{FS}}


\title[Stable isotopy in four dimensions]{Stable isotopy in four dimensions}

\author[Auckly, Kim, Melvin and Ruberman]{Dave Auckly, Hee Jung Kim, Paul Melvin and Daniel Ruberman}

\classno{57M25 (primary), 57Q60 (secondary)}

\extraline{Kim was supported by NRF grants 2012R1A1A2006981 and BK21 PLUS SNU Mathematical Sciences Division.  Ruberman was partially supported by NSF Grant 1105234 and NSF FRG Grant 1065827}


\begin{document}
\maketitle

\begin{abstract}
We construct infinite families of topologically isotopic but smoothly distinct knotted spheres in many simply-connected $4$-manifolds that become smoothly isotopic after stabilizing by connected summing with $\sss$, and as a consequence, analogous families of diffeomorphisms and metrics of positive scalar curvature for such $4$-manifolds.  We also construct families of smoothly distinct links, all of whose corresponding proper sublinks are smoothly isotopic, that become smoothly isotopic after stabilizing. 
\end{abstract}

\section{Introduction}\label{S:intro}

A basic principle of smooth $4$-dimensional topology due to Wall~\cite{wall:4-manifolds} is that homotopy equivalent, simply-connected $4$-manifolds become diffeomorphic after connected summing with sufficiently many copies of $\sss$.  In fact the same result holds for any pair of orientable homeomorphic
$4$-manifolds\,\cite{gompf:stable,kreck:surgery}.  The process of summing with a single $\sss$ is called \emph{stabilization}.  The number of stabilizations needed to obtain a diffeomorphism is not known in general, although it follows from the existence of exotic smooth structures (see \cite{donaldson:dolgachev,donaldson-kronheimer} and many subsequent papers) that at least one stabilization is often required.  In fact exactly one stabilization suffices for many important families of examples (see for example \cite{moishezon:sums,akbulut:fs-knot-surgery,auckly:stable}), and surprisingly, no examples have been found that require more than one (see~\cite{donaldson:orientation,fs:torsion,akbulut-mrowka-ruan} for an approach to this question).  Thus one might reasonably speculate that any pair of homeomorphic simply-connected $4$-manifolds become diffeomorphic after a single stabilization.  Indeed this has been shown to be the case for all {\em odd} (i.e.\ nonspin) pairs produced using `standard methods' (logarithmic transforms, knot surgeries, and rational blow downs) and thus also for all {\em even} pairs after an initial `blowup' (see below)~\cite{baykur-sunukjian:round}. 

This paper explores analogous stabilization questions for smoothly embedded $2$-spheres in \break $4$-manifolds, and for ambient diffeomorphisms derived from such spheres.  For example, it follows from Wall~\cite{wall:diffeomorphisms} and the work of Perron~\cite{perron:isotopy2} and Quinn~\cite{quinn:isotopy} that any pair of $2$-spheres embedded with simply-connected complements in a $4$-manifold that represent the same homology class are topologically isotopic, and they become smoothly isotopic after some number of stabilizations.  Again it is reasonable to ask for bounds on that number.  In this setting, however, no bounds have been established (before now) for any explicit examples.  We will produce infinite families of knotted spheres that require exactly one stabilization to become isotopic (henceforth `isotopic' will mean `smoothly isotopic' unless stated otherwise) and analogous pairs of links of arbitrarily many components all of whose corresponding proper sublinks are isotopic.   

With regard to diffeomorphisms of $4$-manifolds, the appropriate notion of stabilization is to take the connected sum with the identity map on $\sss$, explained in more detail below.  In \cite{quinn:isotopy}, Quinn showed that homotopic diffeomorphisms of any simply-connected $4$-manifold are stably isotopic, again raising the question of how many stabilizations are necessary.  Here we construct examples of infinite families for which exactly one stabilization is needed, and in a similar vein, examples of non-isotopic Riemannian metrics of positive scalar curvature that become isotopic after one stabilization.

We make the following definitions to capture these questions:

\begin{definition*}
Two simply-connected $4$-manifolds $X$ and  $Y$ are {\it $n$-stably equivalent} if 
$$
X\cs n\hskip.5pt\sss \ \cong \  Y\cs n\hskip.5pt\sss
$$
(where we write $n\hskip.5pt Z$ for the connected sum of $n$ copies of $Z$, and $\cong$ denotes orientation preserving diffeomorphism) and {\em strictly $n$-stably equivalent}, written $X\cong_n Y$, if they are $n$-stably but not $(n-1)$-stably equivalent.  There are analogous definitions for stabilizations with other manifolds $Z$ in place of $\sss$ (typically $Z=\pm\cptwo$ or $\sst$), with the corresponding strict equivalence denoted $X\cong_nY$ (mod\,$Z$), but to avoid confusion here we will reserve the term `stabilization' for $\cs\,\sss$.
 
Similarly, two embedded $2$-spheres $S\,,\,T \subset X$ (resp.\ diffeomorphisms $f\,,\,g: X\to X$) are  {\em $n\textit{-stably}$\,isotopic} 
if the natural embeddings $S\,,\,T \subset X\cs n\hskip.5pt\sss$ are isotopic (resp.\ the connected sums of $f$ and $g$ with the identity map $\id$ on $n\hskip.5pt\sss$ are isotopic; this connected sum is shown to be well-defined up to isotopy in section~\ref{S:stabilizing}; compare~\cite{giansiracusa:mcg}).  There is also a weaker notion of {\em $n$-stable equivalence}, requiring only the existence of a diffeomorphisms $\sigma$ and $\tau$ of $X\cs n\hskip.5pt\sss$ such that $T = \sigma(S)$ (resp.\ $g\cs\id = \tau\circ(f\cs\id)\circ\sigma$).  The notions of {\em strict $n$-stable isotopy} and {\em equivalence} are defined in the obvious way.  We note the simple fact that the notions of $n$-stable isotopy and equivalence are actually equivalence relations.
\end{definition*}

With this terminology, the principal goal of this paper is to produce {\em explicit} examples of strictly $1$-stably isotopic embeddings of spheres in simply-connected $4$-manifolds, and thereby to produce strictly $1$-stably isotopic diffeomorphisms of related manifolds. The emphasis is on the word `explicit', for the existence of such examples follows directly from the existence of $n$-stably isotopic spheres that are not smoothly isotopic.  The issue is that one has no {\em a priori} control on $n$.  We gain this control by working locally, within a blown-up cork (as explained in section 3), thereby producing explicit examples of strictly $1$-stably isotopic $2$-spheres in large families of `simple' $4$-manifolds.

Before proceeding, recall that {\em blowing up} an oriented $4$-manifold $X$ refers to the process of connected summing with $\pm\cptwo$.  The effect is to replace a point in $X$ with an embedded \break $2$-sphere $\pm\cpone$, the {\em exceptional sphere} of the blowup.  The choice of orientation on $\cptwo$ naturally affects the outcome, detected by the signature.  Using $+\cptwo$ yields the {\em positive blowup} $X^+$, and using $-\cptwo$ (also commonly written $\cptwobar$) yields its {\em negative blowup} $X^-$\,:
$$
X^+ \ = \ X\cs\cptwo \quad\textup{and}\quad X^- \ = \ X\cs\cptwobar
$$
This terminology is borrowed from the theory of complex surfaces, where one blows up a point by replacing it with the projective line -- or {\em exceptional curve} -- of complex tangent lines through that point, which in this case has self-intersection $-1$.  Thus the complex blowup corresponds to the negative topological blowup; as a result, some topologists prefer to call $X^-$ the {\em blowup} of $X$, and $X^+$ its {\em antiblowup}, but we do not do so here.  Note that if one restricts to blowups of the same sign, then there are many examples of homeomorphic $4$-manifolds that never become diffeomorphic no matter how many times they are blown up.  To achieve the analogue of Wall's stabilization result, one needs to allow blowups of both signs.  Indeed, it is well known that $X^{+-}\cong X\cs\sss$ when $X$ is odd, and so an equal number of positive and negative blowups will eventually yield diffeomorphic manifolds.   

Conversely, one can {\em blow down} an embedded $2$-sphere $S$ of self intersection $\pm1$ in a $4\textup{-manifold}$ $X$.  The result will be denoted by $X/S$ since, topologically, it is the quotient space obtained by collapsing $S$ to a point.  For example $X = X^\pm/S^\pm$, where $S^\pm$ is the exceptional sphere in $X^\pm$.

The embedded spheres (and related diffeomorphisms and metrics) constructed in this paper to illustrate $1$-stability all live in the family of {\em completely decomposable} $4$-manifolds
$$
\xpq \ := \ p\hskip.5pt\cptwo \cs  q\hskip.5pt\cptwobar
$$
obtained from $S^4$ by $p$ positive and $q$ negative blowups.


\begin{thm}\label{T:spheres} 
If $p \geq 4$ is even and $q \geq 5p$, then $\xpq$ contains an infinite family $\{S_k \st k\ge0\}$ of topologically isotopic embedded $2$-spheres that are pairwise strictly $1$-stably isotopic, i.e.\ smoothly isotopic in $\xpq\cs\sss$ but not in $\xpq$. For $q\ge9$,  $\bx_{2,q}$ contains a pair $S,T$ of topologically isotopic spheres that are strictly $1$-stably isotopic
\end{thm}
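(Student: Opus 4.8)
The plan is to construct the spheres $S_k$ locally inside a blown-up cork and then detect the failure of smooth isotopy using a gauge-theoretic invariant. First I would recall the standard machinery of corks and cork twists: one takes a contractible $4$-manifold $C$ with an involution $\tau$ on its boundary $\partial C$ that does not extend over $C$, and one knows that cork twisting is the universal mechanism producing exotic phenomena among simply-connected $4$-manifolds. The key is to enhance this picture by blowing up: inside a suitable blown-up version of the cork (the ``blown-up cork'' promised in Section~3), one produces a pair of properly embedded disks, or a configuration of spheres, that are related by the cork involution but are not smoothly isotopic rel the rest of the manifold. The even condition $p\geq 4$ and the large number of negative blowups $q\geq 5p$ are precisely what is needed to embed this blown-up cork in $\xpq$ so that the relevant homology classes are available and the complement has the right (simply-connected, definite-enough) form for the gauge theory to apply; for the pair $S,T$ in $\bx_{2,q}$ one needs only $q\geq 9$ because a single such local model, rather than an infinite tower, must be accommodated.

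Next I would build the spheres. Starting from a fixed embedded sphere $S_0 = S$ in $\xpq$ (or $\bx_{2,q}$) whose complement is simply connected, I would modify it inside a ball meeting the blown-up cork region by a ``Gluck-type'' or cork-twist construction that produces $S_k$ (and $T$) with the same homology class and the same topologically-simply-connected complement. By the Perron--Quinn result quoted in the introduction, all the $S_k$ (and $S,T$) are then topologically isotopic in $\xpq$. For the stable statement, I would show that after a single connected sum with $\sss$ the local model becomes standard: the cork twist, once there is an extra $\sss$-summand to absorb the handle slides, extends to an ambient diffeomorphism isotopic to the identity carrying $S_k$ to $S_0$, so all the $S_k$ become smoothly isotopic in $\xpq\cs\sss$ and likewise $S\cong T$ there. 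This is the ``one stabilization suffices'' half and should follow from the same handle-calculus argument that underlies Wall's and Akbulut's stabilization theorems, localized in the blown-up cork.

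The substantive half is showing that the $S_k$ are pairwise non-isotopic (and $S\not\simeq T$) in the unstabilized $\xpq$. Here I would pass to the double branched cover, or more efficiently to a canonically associated closed $4$-manifold: cutting out a tubular neighborhood of $S_k$ and regluing, or taking a cyclic cover branched along $S_k$, yields a family of smooth $4$-manifolds $Z_k$ which are all homeomorphic (by Freedman, using the topological isotopy) but whose Seiberg--Witten or Donaldson invariants are pairwise distinct. The large negative-definite part contributed by $q\geq 5p$ negative blowups is what lets one apply a blowup/gluing formula and a Fintushel--Stern knot-surgery-type computation to separate the invariants of the $Z_k$: varying $k$ corresponds to varying a gluing parameter (e.g.\ the number of internal twists, or a knot/framing in the cork), which changes the SW basic classes in a controlled, injective way. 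If the $S_k$ were isotopic in $\xpq$, the $Z_k$ would be diffeomorphic, contradicting the invariant computation; hence the isotopy is strictly $1$-stable.

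The main obstacle I anticipate is the gauge-theoretic separation step: one must arrange the blown-up cork and the embedding into $\xpq$ so that (i) the associated closed manifolds $Z_k$ have $b^+>1$ (or $b^+=1$ with a controlled wall-crossing) so that Seiberg--Witten invariants are defined and diffeomorphism invariants, (ii) the reglued pieces interact with a nontrivial SW piece (this is where the $p\geq 4$ even positive blowups and the embedded model surface of nonzero self-intersection enter), and (iii) the dependence on $k$ genuinely changes a basic class rather than being absorbed. Managing the bookkeeping of homology classes and characteristic elements across the $\cs\sss$ stabilization — checking that the very same local move that is exotic in $\xpq$ becomes trivial in $\xpq\cs\sss$ — is the delicate point, and is exactly what the ``working locally within a blown-up cork'' strategy advertised in the introduction is designed to handle.
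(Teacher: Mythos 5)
Your general outline -- localize inside a blown-up cork, produce the spheres via the cork involution, invoke Perron--Quinn for topological isotopy, and detect the failure of smooth isotopy gauge-theoretically -- tracks the paper's strategy. But two of your key steps contain genuine gaps.

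First, the ``one stabilization suffices'' half. You assert that once an $\sss$-summand is present, ``the cork twist extends to an ambient diffeomorphism isotopic to the identity carrying $S_k$ to $S_0$.'' This is both stronger than what is actually shown and, more importantly, not a proof. The whole point of the paper's Section on stable diffeomorphism versus stable isotopy is that exhibiting an ambient diffeomorphism, or even a homotopy-to-the-identity diffeomorphism, is not known to give an isotopy of surfaces in dimension four; an explicit isotopy must be produced. The paper does this via the ``key stable isotopy'' in Section~2: one shows by handle calculus that sliding the sphere twice with opposite orientations over one of the new $\sss$-handles is isotopic to pulling the sphere \emph{under} that handle, using the dual $2$-sphere introduced by the stabilization (Assertions~\ref{A:welldefined} and~\ref{A:cancel}) to make the tube-cancellation well-defined. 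This is then applied to a carefully drawn model of the blown-up cork $C_h^+\cong R_h$ (Lemma~\ref{L:ribbon}) in which both $S_h$ and $T_h=\tau(S_h)$ become visibly standard disks bounded by the same unknot after the slides, and the isotopy between such disks rel boundary is automatic. Your ``absorb the handle slides'' phrase gestures at this, but without the cancellation lemma and the $\tau$-equivariant model you have no argument.

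Second, the detection step. You propose taking a double branched cover along $S_k$, but the spheres constructed all have self-intersection $+1$, so there is no double branched cover along them (the branch locus must have even self-intersection). Your alternative of ``cutting out a tubular neighborhood and regluing'' is on the right track, but the clean version is simply the blowdown: $X^+/S\cong X$ while $X^+/T\cong X^\tau$, and these are distinguished by Seiberg--Witten invariants precisely because $C$ is a cork in $X$. No branched covers or Fintushel--Stern computations are needed at this stage; the SW separation is already packaged in the cork hypothesis. (The knot-surgery input does appear, but one level up, via Akbulut--Yasui, to produce the infinitely many distinct cork \emph{embeddings} of $\bw$ in $E(n)^{-\tau}$, each of which yields one $S_k$.) You also omit the Mandelbaum--Moishezon step needed to identify $E(n)^{-\tau+}\cong E(n)^{-+}$ with $\bx_{2n,10n}$, and the positron cork $\bp$ in the Dolgachev surface for the $p=2$ case; both are essential to land in the stated families $\xpq$.
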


In fact all the spheres we construct have self-intersection $+1$, and their blowdowns have distinct Seiberg-Witten invariants.  Hence they are not only smoothly nonisotopic, but a fortiori {\em smoothly inequivalent}, i.e.\ no diffeomorphism of $\xpq$ carries any one to any other.  

Our examples arise naturally, illustrating a close relation between $1$-stable equivalence of manifolds modulo $\cptwo$ and inequivalence of spheres of self-intersection $+1$:  Start with a pair of $4$-manifolds $X\cong_1 Y$ (mod\,$\cptwo$), i.e. $X\not\cong Y$ but $X^+\cong Y^+$.  Let $S$ be the exceptional sphere in $X^+$, and $T$ be the image of the exceptional sphere in $Y^+$ under any diffeomorphism $Y^+\to X^+$.  Then $S$ and $T$ are inequivalent in $X^+$.  The harder task is to produce examples that are strictly $1$-stably isotopic, i.e.\ isotopic in $X^+\cs\sss$ (also see \secref{diff-isotopy} below).  We accomplish this by localizing our construction:   There are pairs of strictly $1$-stably isotopic spheres in the blowup $W^+$ of a certain contractible $4$-manifold $W$, the Akbulut-Mazur cork (see \secref{cork}) which has the needed properties to produce the infinite family of spheres in \thmref{spheres}.  In fact, $W$ is the first of an infinite family of corks with these same properties, each producing infinite families of spheres (see \secref{cork}).  To extend the result when $p=2$ to produce an {\em infinite} family of spheres, it would be sufficient to find a suitable embedding of such a cork in $\bx_{1,9}$.

\thmref{spheres} extends to a stabilization result for {\em links} of spheres in $4$-manifolds. The links we find come in families that satisfy  a property reminiscent of Brunnian links~\cite{brunn,debrunner:brunnian} in that, once suitably ordered, all their corresponding {\em proper} sublinks are smoothly isotopic.  We will refer to any such family of links as a {\em Brunnian family}. 


\begin{thm}\label{T:links}
Fix $m>1$.  If $p\ge2$ is even and $q\ge5p+2$ $($or $q\ge11$ when $p=2)$ then $\bx_{p+m-1,q}$ contains a Brunnian pair of topologically isotopic $m$-component ordered links that are strictly $1$-stably isotopic.  Furthermore, for $p \geq 4$ there exists an infinite family of smoothly distinct such $m$-component links.
\end{thm}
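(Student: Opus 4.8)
The plan is to bootstrap \thmref{links} from the sphere-level construction of \thmref{spheres}, exploiting the fact that our examples are produced locally inside a blown-up cork.  First I would recall the mechanism behind \thmref{spheres}: inside the blowup $W^+$ of the Akbulut--Mazur cork $W$ one has a pair $S,T$ of strictly $1$-stably isotopic spheres of self-intersection $+1$, and an embedding of (a suitable blowup of) $W$ into $\bx_{p,q}$ spreads these to the ambient manifold.  To build an $m$-component link I would take $m$ disjointly embedded copies of the relevant cork piece --- or rather one cork together with $m-1$ standardly embedded exceptional spheres --- inside a slightly larger completely decomposable manifold, which is why the parameter jumps from $p$ to $p+m-1$ (each extra component costs one extra $\cptwo$ summand, hence one extra unit in the first index, and the bound on $q$ shifts by a fixed amount to accommodate the Seiberg--Witten estimates).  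Ordering the components so that the `interesting' sphere is the last one, the first $m-1$ components are unknotted exceptional spheres sitting in separate summands, and are therefore isotopic in the pair of links; the last component is $S$ in one link and $T$ in the other.

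Next I would verify the Brunnian property.  Deleting the last component leaves $m-1$ standard exceptional spheres in both links, which are manifestly isotopic.  Deleting any one of the first $m-1$ components leaves a link whose last component still carries the $S$-versus-$T$ distinction, but --- and this is the key point --- once that component is removed there is enough room to find an isotopy: the blowdown of the freed-up $\cptwo$ summand converts $S$ and $T$ into spheres in a manifold where the cork can be `uncorked', or equivalently the extra $S^2\times S^2$-summand needed for stabilization becomes available from the cork region itself.  Concretely, I would show that in the sublink ambient manifold the complement of the removed component contains a piece diffeomorphic to $\bx_{p,q}\cs\sss$, so the $1$-stable isotopy of \thmref{spheres} takes place entirely within the sublink.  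Thus every proper sublink is smoothly isotopic while the full links are not.

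For the final sentence --- the existence, when $p\ge4$, of an infinite family of smoothly distinct such $m$-component links --- I would run the same argument with the infinite family $\{S_k\}$ of \thmref{spheres} in place of the single pair $S,T$, using the infinite family of corks mentioned after \thmref{spheres}.  The link $L_k$ is then $m-1$ standard exceptional spheres together with $S_k$; the Brunnian property is inherited verbatim, and smooth distinctness of the $L_k$ follows from smooth distinctness of the $S_k$, which in turn is detected by the differing Seiberg--Witten invariants of the blowdowns (these invariants are unaffected by the presence of the disjoint standard summands).  The topological isotopy statement follows from Perron--Quinn as in the sphere case, applied componentwise, since the complements remain simply connected.

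The main obstacle I expect is the Brunnian step --- specifically, producing the smooth isotopy of a proper sublink that results from deleting one of the `auxiliary' exceptional components rather than the distinguished one.  One has to argue that removing a single $\cptwo$-worth of room is exactly enough to absorb the one stabilization that \thmref{spheres} requires, and this needs a careful handle-theoretic description of how the cork, its blowup, and the stabilizing $\sss$ interact inside $\bx_{p+m-1,q}$; getting the numerology (the precise lower bounds on $p$ and $q$, and the $q\ge11$ exceptional case when $p=2$) to work out uniformly in $m$ is the delicate part.
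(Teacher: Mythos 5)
Your overall architecture is the same as the paper's: take one cork sphere ($S$ or $T$) together with $m-1$ standard exceptional spheres, prove smooth distinctness by blowing down and comparing Seiberg--Witten invariants, establish the Brunnian property by exploiting the room freed up when one component is removed, and get the infinite family from the infinitely many cork embeddings.  So the approach is right, and your accounting of the index shift (one extra $\cptwo$ per component, a constant bump in $q$) is correct in spirit.

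The description of the Brunnian step, however, has a genuine confusion.  The paper works in $Z = X^+ \cs (m-1)\,\cptwo \cs 2\,\cptwobar$, and after discarding one exceptional component the freed $\cptwo$ is regrouped with one $\cptwobar$ to form $\sss$ (since $X^+$ is odd), exhibiting $Z = (X^+\cs\sss\cs\cptwobar) \cs (m-2)\,\cptwo$ with $S$ and $T$ sitting in $X^+\cs\sss$ where the $1$-stable isotopy applies.  You describe this instead as a \emph{blowdown} of the freed $\cptwo$ and say, ``or equivalently the extra $\sss$ summand $\dots$ becomes available from the cork region itself.''  Both phrasings are off: blowing down alters the ambient manifold, whereas the isotopy of the sublinks must take place in $Z$ itself; and the $\sss$ does not come from the cork but from the pair $\cptwo\cs\cptwobar$, which is exactly why the construction must carry two spare $\cptwobar$ summands.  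You never introduce these $2\,\cptwobar$'s, yet they do double duty --- one to convert the freed $\cptwo$ into $\sss$, the other to survive into $X^{--}$ so that, by the blowup formula, the cork is still effective and the full links are distinguished --- and they are precisely the source of the $+2$ in the bound $q\ge5p+2$.  Your auxiliary claim that the complement of the removed component contains $\bx_{p,q}\cs\sss$ is also off by exactly these two $\cptwobar$'s.  Finally, for the topological isotopy the paper simply notes that a cork twist is topologically isotopic to the identity (Perron--Quinn), so it carries $L$ to $M$ as whole links; arguing ``componentwise'' does not quite make sense, since one needs a single ambient isotopy moving all components simultaneously.
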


The spheres described in \thmref{spheres} in turn lead to examples of strictly $1$-stably isotopic diffeomorphisms by invoking a construction that dates back to early work of Wall~\cite{wall:diffeomorphisms}, or even earlier in Picard-Lefschetz theory~\cite{lefschetz:alg-geom}.  The key observation is that an embedded sphere $\Sigma$ of self-intersection $\pm 1$ or $\pm 2$ in a $4$-manifold $X$ gives rise to a diffeomorphism of $X$, supported in a neighborhood of $\Sigma$, that induces a `reflection' in the second homology.  An isotopy of such $2$-spheres yields an isotopy of the corresponding diffeomorphisms, by the isotopy extension theorem.  Note that the converse of this statement is not known to be true in general, but it holds `morally': the  non-isotopic diffeomorphisms constructed in~\cite{ruberman:isotopy,ruberman:polyisotopy,ruberman:swpos} are compositions of such reflections in spheres, and in the end are detected by gauge-theoretic invariants that also show these spheres to be non-isotopic. Combining these results with Theorem~\ref{T:spheres} gives $1$-stability of an infinite family of non-isotopic diffeomorphisms.


\begin{thm}\label{T:diffs}
If $p \geq 4$ is even and $q \geq 5p+2$, then $\xpq$ supports an infinite family \break $\{f_k:\xpq\toself \st k\geq 0\}$ of topologically isotopic self-diffeomorphisms that are pairwise strictly $1$-stably isotopic, i.e.\ not smoothly isotopic, but such that the diffeomorphisms $f_k \cs{\idsss}$  of $\xpq \cs \sss$ are all smoothly isotopic.  
\end{thm}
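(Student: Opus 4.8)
The plan is to derive Theorem~\ref{T:diffs} from Theorem~\ref{T:spheres} by the classical ``reflection in a sphere'' construction. First I would recall the construction precisely: an embedded $2$-sphere $\Sigma\subset X$ of self-intersection $+1$ has a tubular neighborhood diffeomorphic to a neighborhood of $\cpone$ in $\cptwo$, i.e.\ to the $D^2$-bundle over $S^2$ with Euler number $+1$, whose boundary is $S^3$. On this neighborhood one has the standard involution induced by complex conjugation on $\cptwo$ (or equivalently a rotation of the fibers combined with the antipodal map on the base), which acts as $-1$ on $H_2$ of the neighborhood and extends by the identity across the complement to give a self-diffeomorphism $r_\Sigma$ of $X$ supported near $\Sigma$, inducing the reflection $x\mapsto x-2\langle x,[\Sigma]\rangle[\Sigma]$ on $H_2(X;\bz)$. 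The key functoriality statement is that an ambient isotopy $\Sigma_0\rightsquigarrow\Sigma_1$, say carried by an ambient isotopy $\phi_t$ of $X$, produces the isotopy $r_{\Sigma_1}=\phi_1\circ r_{\Sigma_0}\circ\phi_1^{-1}$, and that conjugating a diffeomorphism supported in a neighborhood by an ambient isotopy gives an isotopic diffeomorphism; this is the isotopy extension theorem applied to tubular neighborhoods.

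Next I would apply this to the family $\{S_k\}$ of Theorem~\ref{T:spheres} in $\xpq$, valid for $p\ge4$ even and $q\ge5p$ --- note the hypothesis $q\ge5p+2$ in Theorem~\ref{T:diffs} is stronger, and I would need to check that the reflection construction and the accounting below actually require those two extra negative blowups (the likely reason: the diffeomorphism detection argument needs room to form the relevant connected sums or to apply a gauge-theoretic invariant, or one absorbs the sphere into a standard $\cptwo$-summand, costing blowups). Set $f_k := r_{S_k}$. Since all $S_k$ are topologically isotopic (Theorem~\ref{T:spheres}), the $f_k$ are topologically isotopic by the topological isotopy extension theorem. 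Since all $S_k$ become smoothly isotopic in $\xpq\cs\sss$, and one must check that $r_{S_k}$ computed in $\xpq$ agrees up to isotopy with $r_{S_k}\cs\idsss$ computed in $\xpq\cs\sss$ --- this is immediate because $r_{S_k}$ is supported in a neighborhood of $S_k$ disjoint from the summing region, so it literally equals $r_{S_k}\cs\idsss$ --- the smooth isotopy $S_k\rightsquigarrow S_0$ in $\xpq\cs\sss$ conjugates $f_k\cs\idsss$ to $f_0\cs\idsss$, proving they are all smoothly isotopic there.

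The remaining, and main, point is to show the $f_k$ are pairwise \emph{not} smoothly isotopic in $\xpq$, hence strictly $1$-stably isotopic. Here one cannot simply quote non-isotopy of the spheres, since the converse implication fails in general; instead I would use the gauge-theoretic invariant of diffeomorphisms (the invariant of \cite{ruberman:isotopy,ruberman:polyisotopy,ruberman:swpos} obtained by counting solutions to the Seiberg--Witten equations over a $1$-parameter family interpolating between a metric and its pushforward). The strategy is: if $f_k$ and $f_j$ were isotopic, then the invariant that distinguishes them --- built from the difference of Seiberg--Witten invariants of the blown-down manifolds $\xpq/S_k$ and $\xpq/S_j$ along the relevant $\mathrm{spin}^c$ structures --- would vanish. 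So the heart of the matter is to compute these relative invariants and show they are distinct for distinct $k$, which is exactly the computation already carried out (for the spheres) in the proof of Theorem~\ref{T:spheres} via the differing Seiberg--Witten invariants of the blowdowns; one then packages it through the wall-crossing/gluing formula that relates the diffeomorphism invariant of $r_\Sigma$ to the SW invariants of $X$ and of $X/\Sigma$. The hard part, and the step I expect to require the most care, is this last translation --- making rigorous that the diffeomorphism invariant of the reflection $r_{S_k}$ is a nonzero combination of the blowdown Seiberg--Witten invariants, with enough negative blowups ($q\ge5p+2$) present to guarantee the dimension count for the parametrized moduli space works and the invariant is well-defined --- rather than the homological or point-set topology, which is routine.
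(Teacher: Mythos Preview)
Your overall architecture is right---build diffeomorphisms from the spheres of Theorem~\ref{T:spheres}, get the stable isotopy from the stable isotopy of spheres, and invoke the parametrized gauge-theoretic invariants of \cite{ruberman:isotopy,ruberman:swpos} for the non-isotopy---but the specific diffeomorphism you define is not the one for which those invariants are known to work, and this is exactly where the extra two blowups come from.

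The paper does \emph{not} set $f_k=r_{S_k}$ for a single reflection in a $+1$-sphere. Instead it takes the spheres $S_k$ in $\bx_{p,q-2}$ (so one needs $q-2\ge 5p$, i.e.\ $q\ge 5p+2$), adjoins two copies of $\cptwobar$ with exceptional spheres $E_1,E_2$, and defines
\[
f_{S_k}\;=\;\rho_{S_k+E_1+E_2}\circ\rho_{S_k-E_1+E_2}
\]
as a composition of two reflections in $(-1)$-spheres inside $\bx_{p,q-2}\cs 2\,\cptwobar=\xpq$. It is for \emph{this} composition that \cite{ruberman:isotopy} proves non-isotopy when the blowdowns $\xpq/S_k$ have distinct Seiberg--Witten invariants; the argument uses the particular way the composed map acts on homology, spin$^c$ structures, and chambers, and it is not established (and you should not assume) that the single reflection $r_{S_k}$ can be distinguished by the same mechanism. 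So your speculation about ``needing room'' is on target, but the concrete reason is that the very definition of $f_k$ in the cited results consumes two $\cptwobar$-summands. Once you use the correct $f_{S_k}$, the $1$-stable isotopy step goes through just as you outlined: the $S_k$ become isotopic in $\bx_{p,q-2}\cs\sss$, hence so do $S_k\pm E_1+E_2$ in $\xpq\cs\sss$, hence so do the $f_{S_k}\cs\idsss$.
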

We will show that the stabilization $f \mapsto f \cs{\idsss}$ is in fact well-defined on isotopy classes of (orientation preserving) diffeomorphisms, and hence induces a homomorphism $\Phi: \pi_0(\diff(M)) \to \pi_0(\diff(M \cs \sss))$.  From this, we will deduce that for $\xpq$ as in \thmref{diffs}, the kernel of $\Phi$ is infinite.

\begin{remark*}
Versions of theorems A and C are likely to hold for other families of simply-connected closed manifolds.  For $\bx_{p,q}$ when $p$ is odd, this would require the use of gauge theory invariants for manifolds with even $b_2^+$, and in general would require a better understanding of the exotic smooth structures on other manifolds.   
\end{remark*}

The fact that the diffeomorphisms $f_k$ in Theorem~\ref{T:diffs} are not isotopic is detected by parameterized versions of the Donaldson and Seiberg-Witten invariants~\cite{ruberman:isotopy,ruberman:swpos} that work for even $p\ge4$ (extending such invariants to the $p=2$ case is analogous to defining gauge theory invariants when $b_2^+ = 1$).   Now the manifolds $\xpq$ admit metrics of positive scalar curvature (PSC for short) constructed as the connected sum~\cite{gromov-lawson:psc,schoen-yau:psc} of standard metrics $\gfs$ on $\cptwo$ and $\cptwobar$.  The $1$-parameter version of the Seiberg-Witten invariants reveal an additional fact about such metrics $g$, namely that the pull-back metrics $f_k^*g$, all of which have PSC, are not isotopic to one another, where an isotopy is a $1$-parameter family of PSC metrics.  Connected sum with a PSC metric on $\sss$ then leads to a $1$-stability result for such metrics.


\begin{thm}\label{T:psc}
If $p \geq 4$ is even and $q \geq  5p+2$, then $\xpq$ admits an infinite family \break 
$\{g_k \st k\ge0\}$ of PSC metrics that are mutually non-isotopic.  The metrics $g_k$ become isotopic through PSC metrics after connected summing with the standard PSC metric on $\sss$.
\end{thm}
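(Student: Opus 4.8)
The plan is to transfer the $1$-stable isotopy result for diffeomorphisms (Theorem~C) into a statement about PSC metrics by pulling back a fixed PSC metric along the family $\{f_k\}$. First I would fix the PSC metric $g = \gfs \cs \cdots \cs \gfs$ on $\xpq$ obtained by the Gromov--Lawson/Schoen--Yau connected sum construction from the Fubini--Study metrics on the $\cptwo$ and $\cptwobar$ summands, and set $g_k := f_k^* g$. Since each $f_k$ is a diffeomorphism and $g$ has PSC, every $g_k$ has PSC as well. The content is then twofold: (i) the $g_k$ are pairwise non-isotopic through PSC metrics, and (ii) after connected summing with the standard PSC metric on $\sss$ they all become isotopic through PSC metrics.

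For (ii), I would use that by Theorem~C the diffeomorphisms $f_k \cs \idsss$ of $\xpq \cs \sss$ are all smoothly isotopic to one another (indeed to $f_0 \cs \idsss$, and after composing with a fixed reference we may take this to be the identity up to isotopy). Choosing the standard PSC metric $g_{\sss}$ on $\sss$ compatibly with the connected-sum construction, the metric $(f_k \cs \idsss)^*(g \cs g_{\sss})$ equals $g_k \cs g_{\sss}$ up to the action of a diffeomorphism isotopic to the identity. Pulling back a PSC metric along a smooth isotopy yields a path of PSC metrics, so $g_k \cs g_{\sss}$ is isotopic through PSC metrics to $g_0 \cs g_{\sss}$, giving (ii). Here one must check that the connected-sum operation on metrics is compatible with the connected-sum operation on diffeomorphisms used in section~\ref{S:stabilizing}, i.e.\ that $(f\cs\idsss)^*$ of a connected-sum metric is again (isotopic to) a connected-sum metric; this is a routine matter of performing the Gromov--Lawson neck construction inside a ball disjoint from the support of the reflections, which is possible since each $f_k$ is supported away from a fixed ball.

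The main obstacle is (i), the non-isotopy of the $g_k$. The strategy, following~\cite{ruberman:isotopy,ruberman:swpos}, is to use the fact that a path of PSC metrics gives a path of metrics for which the Seiberg--Witten moduli space (for every $\mathrm{Spin}^c$ structure with $c_1$ having nonzero square of the right sign) is empty, hence the parameterized Seiberg--Witten invariant defined by counting solutions over a path of metrics-and-perturbations must vanish when the path lies in the PSC locus. On the other hand, the $1$-parameter Seiberg--Witten invariant associated to the loop of metrics $g \rightsquigarrow f_k^* g$ (closed up using a path in the contractible space of metrics, or rather the relevant component) is exactly the invariant that detects $f_k \not\simeq f_0$ in Theorem~C, and it takes distinct values for distinct $k$ (this is where even $p \ge 4$ is used, so that $b_2^+$ is odd and $>1$ and the invariant is well-defined). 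If $g_k$ were isotopic through PSC metrics to $g_0$, we could splice that PSC isotopy into the loop and conclude the invariant vanishes, a contradiction. Thus the key technical point I expect to be delicate is the precise bookkeeping identifying the parameterized SW invariant of the metric loop with the diffeomorphism invariant of $f_k$, and verifying that inserting a PSC isotopy indeed forces vanishing — both of which are established in the cited work of the fourth author and which I would invoke rather than reprove.
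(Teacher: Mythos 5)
Your proposal is correct and follows essentially the same route as the paper: define $g_k = f_k^* g$ for a fixed PSC metric $g$ on $\xpq$, cite~\cite{ruberman:swpos} for the non-isotopy of the $g_k$, and deduce the $1$-stable isotopy from \thmref{diffs} after arranging the connected-sum neck (torpedo region) disjoint from the supports of the reflections. The paper is slightly more terse on the non-isotopy step but invokes the same parameterized Seiberg--Witten argument you sketch.
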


\begin{remark*}
In contrast to the well-known stabilization theorems for simply-connected $4$-manifolds and diffeomorphisms, there does not seem to be a general stabilization result for PSC metrics.  The paper~\cite{botvinnik:psc-isotopy} (cf.\ also~\cite{walsh:psc-morse-I,walsh:psc-morse-II}) would suggest that such a result ought to hold, but at present there is a gap in one of the main geometric steps in~\cite{botvinnik:psc-isotopy}.  Note that the stabilization results for manifolds (under $\cs\,\sss$) generalize to  arbitrary closed oriented $4$-manifolds~\cite{gompf:stable,kreck:surgery}, but fail for nonorientable $4$-manifolds~\cite{cappell-shaneson:rp4}.  Similarly, orientability may be essential for analogous results for PSC metrics.  In particular, it seems likely that the metrics constructed in~\cite[\S 6]{ruberman:swpos} remain non-isotopic, even up to diffeomorphism, after an arbitrary number of stabilizations.
\end{remark*}

\subsection*{\bf Stable diffeomorphism versus stable isotopy}\label{S:diff-isotopy}
It is important to keep track of the distinction between equivalence of surfaces up to {\it diffeomorphism} and up to {\it isotopy}, especially in applications to questions of stable isotopy of diffeomorphisms and PSC metrics.

For knots in $S^3$, Cerf's theorem~\cite{cerf:diffS3} implies that the existence of an orientation preserving diffeomorphism taking one knot to another yields an ambient isotopy accomplishing the same thing. The analogous result (replacing {\em orientation preserving} with {\em homotopic to the identity}) is not known to be true for surfaces in $4$-manifolds. In fact we suspect that this is false.  Thus, any argument that does not produce an explicit isotopy at some point is unlikely to be a valid argument showing that two surfaces in a 4-manifold are smoothly isotopic.  Our proof of \thmref{spheres} produces such an isotopy. 

In contrast, it is easy to construct pairs of spheres that are smoothly distinct, yet become equivalent by an ambient diffeomorphism after one stabilization.  Indeed, take any two distinct, simply-connected, smooth 4-manifolds $X$ and $Y$ that are homeomorphic, but become diffeomorphic after summing with $\cptwo$, and also 
with $\sss$.  Then by choosing a diffeomorphism $f: Y\cs\cptwo \to X\cs\cptwo$ appropriately, one can assume that the spheres $\cpone$ and $f(\cpone)$ in $X
\cs\cptwo$ are topologically isotopic.  These spheres are smoothly distinct in $X \cs \cptwo$, since a diffeomorphism taking one to the other would produce a diffeomorphism between $X$ and $Y$, by blowing down.  Now for any diffeomorphism $g: (\sss)\cs X \to (\sss)\cs Y$, the diffeomorphism $$(1 \cs f) \circ (g \cs 1):(\sss) \cs X \cs \cptwo \ \lto \ (\sss) \cs X \cs \cptwo$$ takes $\cpone$ to $f(\cpone)$.  There are a few subtle points here, since the connected  sum of diffeomorphisms is not generally well defined, but it is (as shown in \secref{stabilizing}) in the cases above.


The above argument does not say anything about the existence of an isotopy between $\cpone$ and $f(\cpone)$ in $(\sss) \cs X \cs \cptwo$.  We can arrange that $(1 \cs f) \circ (g \cs 1)$ is homotopic to the identity, but it's certainly not true in general that homotopy implies isotopy~\cite{ruberman:isotopy}.  The point of our work is that by looking more carefully one
 can actually see the isotopy.

\begin{acknowledgements}\label{ackref}
The authors discussed this material at conferences held at MSRI and BIRS; we thank both institutes for their hospitality. We also thank Ryan Budney and Boris Botvinnik for helpful comments on the material related to diffeomorphisms and positive scalar curvature metrics.  A key idea in our proof of \thmref{spheres} is to perform the stabilization inside a cork.  This localization leads to readily drawn pictures of non-isotopic spheres that from other points of view would seem rather complicated and hard to work with. We are happy to acknowledge the influence of Selman Akbulut's cork-twisting technique on our approach to this problem, and to thank him for some interesting exchanges on the subject (cf.~\cite{akbulut:spheres}).
\end{acknowledgements}



\section{Some basic constructions}\label{S:prelim}


In this section, we discuss some constructions that will be used repeatedly in the proofs of our main results.

\subsection*{\bf Surfaces and handles}\label{S:surfaces}

Let $Z$ be an oriented, compact connected $4$-manifold with a fixed handle decomposition.  We view the $1$-skeleton of $Z$ as $B^4-N$, where $N$ is a union of unknotted, embedded open $2$-handles in $B^4$ attached along an unlink $L$ in $S^3$.  In pictures, following a long-standing convention, we put dots on the components of $L$ to distinguish them from the attaching circles of the $2$-handles (see \cite{kirby:4-manifolds} or~\cite{gompf-stipsicz:book} for detailed expositions of $4$-dimensional handle calculus).  As an additional notational convention, we will write $h_c$ for the $2$-handle attached along a knot $c$ in $S^3-L$.

An oriented surface $F$ embedded in $Z$ is described easily in terms of the given handle decomposition (this is discussed in~\cite{akbulut-kirby:branch} and~\cite[\S 6.2]{gompf-stipsicz:book} in some special cases):   By transversality, we can assume after an isotopy that $F$ is disjoint from the $3$ and $4$-handles, intersects the $2\textup{-handles}$ in some number of parallel copies of their cores, and intersects the $1$-handles in tubes and bands that run parallel to their cores.  The rest of $F$ can be moved to a proper critical level embedding in $B^4 - N$, with minima at radius $1/4$, saddles at radius $1/2$, and relative maxima at radius $3/4$ (although maxima will not arise for surfaces considered here).  By projecting back up to radius $1$, we see most of this surface immersed in $S^3-L$ with boundary consisting of framed pushoffs of the attaching circles of the $2$-handles and zero-pushoffs of the dotted circles, which bound disks missing from our view.  In pictures, rather than actually drawing the pushoffs of these circles, we will extend the surface to abut them by adding a boundary collar.  The $1$-handles associated with the saddles, which we refer to as {\em ribbons}, will typically be shaded darker (green) to remind us that they lie above the lighter (blue) regions around the minima.

In the arguments below, we will need to carry $F$ along as the handle decomposition of $Z$ changes by handle slides.  The simplest non-trivial case occurs when a $2$-handle $h_a$ disjoint from $F$ slides over a $2$-handle $h_c$ that meets $F$, resulting in a new $2$-handle $h_{a'}$.  This is illustrated in \figref{slide1}.  The handle $h_c$ and $F$ are unchanged; the apparent intersection of $a'$ with $F$ does not occur since $F$ is actually pushed into the $4$-ball at that point.

\fig{60}{FigSlide1}{
  \put(-328,3){\footnotesize$a$}
  \put(-225,10){\footnotesize$c$}
  \put(-90,3){\footnotesize$a'$}
  \put(-64,24){\footnotesize$c$}
  \put(-227,41){\small$1$}
  \put(-163,12){\footnotesize slide}
\caption{Sliding a $2$-handle over a $2$-handle containing a surface}
\label{F:slide1}}

By contrast, if $h_a$ meets $F$ and $h_c$ does not, then sliding $h_a$ over $h_c$ will drag a portion of $F$ over $h_c$, as indicated in \figref{slide2}.  We use $F_c$ (read `$F$ over $c$') to denote this new picture of $F$ in the new handlebody, and refer to this as a {\em surface-slide}.

\fig{60}{FigSlide2}{
  \put(-326,3){\footnotesize$a$}
  \put(-222,12){\footnotesize$c$}
  \put(-89,3){\footnotesize$a'$}
  \put(-53,25){\footnotesize$c$}
  \put(-224,41){\small$1$}
  \put(-340,25){\footnotesize$F$}
  \put(-105,26){\footnotesize$F_c$}
  \put(-178,13){\footnotesize surface-slide}
\caption{Sliding a $2$-handle containing a surface over a $2$-handle}
\label{F:slide2}}

A second isotopy of this nature, called a {\em band-slide}, moves a ribbon portion of $F$ over a handle $h_c$ by the sliding the handles $h_a$ and $h_b$ that `bound' the ribbon over $h_c$, as shown in \figref{slideband}.  
We denote the resulting surface by $F_{c\ol c}$, where the $\ol c$ denotes $c$ with its orientation reversed.  This notation records the fact that the slides are oppositely oriented, since $a$ and $b$ are oppositely oriented as boundaries of $F$.  Note that $a$ and $b$ may actually coincide, in which case the handle $h_a=h_b$ is slid over $h_c$ twice with opposite orientations.

\fig{55}{FigSlide3}{
  \put(-327,10){\footnotesize$a$}
  \put(-312,3){\footnotesize$b$}
  \put(-328,55){\footnotesize$F$}
  \put(-217,13){\footnotesize$c$}
  \put(-98,9){\footnotesize$a'$}
  \put(-98,53){\footnotesize$F_{c\ol c}$}
  \put(-82,3){\footnotesize$b'$}
  \put(-65,28){\footnotesize$c$}
  \put(-215,40){\small$0$}
  \put(-172,13){\footnotesize band-slide}
\caption{Band-slide over a $2$-handle}
\label{F:slideband}}

For our present purposes, we need only consider surface slides of $F$ over $h_c$ when $c$ is {\em zero-framed} and {\em unknotted}.  A small isotopy of the $c$ then produces a tube in $F_c$, as shown on the right side of \figref{tubes}.  \newpage  

\fig{180}{FigSlide4}{
  \put(-338,125){\footnotesize$a$}
  \put(-264,169){\footnotesize$b$}
  \put(-235,132){\footnotesize$c$}
  \put(-96,123){\footnotesize$a'$}
  \put(-20,145){\footnotesize$c$}
  \put(-310,7){\footnotesize$a'$}
  \put(-254,80){\footnotesize$c$}
  \put(-98,6){\footnotesize$a'$}
  \put(-16,78){\footnotesize$c$}
  \put(-233,160){\small$0$}
  \put(-355,145){\footnotesize$F$}
  \put(-115,145){\footnotesize$F_c$}
  \put(-118,29){\footnotesize$F_c$}
  \put(-190,155){\footnotesize surface-slide}
  \put(-32,100){\footnotesize isotopy}
  \put(-180,22){\footnotesize isotopy}
  \put(-380,31){\footnotesize$F_c = F^c \cs_\sigma S$}
  \put(-220,54){\footnotesize$\sigma$}
\caption{Creation of tubes}
\label{F:tubes}}

Let $c'$ denote the mid height circle of this tube, which bounds a disk $D'$ in $F_c$ parallel to the core of $h_c$.  Now $c'$ may or may not bound a compatibly framed disk $D$ in $B^4-F_c$; in the case pictured it does. If $D$ exists, one can perform ambient surgery on $F_c$ to produce a new surface $F^c$ (read `$F$ under $c$') by replacing $D'$ with $D$, and represent $F_c$ as an embedded connected sum $F^c\cs_\sigma S$, where $S$ is (isotopic to) the sphere $D\cup D'$.  Here $\sigma$ is an arc that guides the tube, which can of course be moved into different positions as in the passage from the lower right to the lower left in \figref{tubes}.

Recall that the connected sum $F\cs_\sigma S$ can be defined for {\em any} two disjoint, oriented surfaces $F$ and $S$ in a $4$-manifold $Z$, where $\sigma$ is an embedded arc joining $F$ to $S$ with interior in $Z-(F\cup S)$.  This construction is independent up to isotopy of the splitting of the normal bundle of $\sigma$ one uses to specify the tube, but depends in general on the relative homotopy class of $\sigma$ (the endpoints of $\sigma$ are allowed to move in $F$ and $S$ during the homotopy).  If there is only one such homotopy class, then we say that the connected sum, simply written $F\cs S$, is {\em well-defined}.  The following two assertions are readily verified:

\begin{assertion}\label{A:welldefined}
If $Z-(F\cup S)$ is simply-connected, then $F\cs S$ is well-defined.
\end{assertion}

\begin{assertion}\label{A:cancel}
If $S$ is a sphere with trivial normal bundle and $\ol S$ is an oppositely oriented pushoff of $S$, then $(F\cs S)\cs \ol S$ is isotopic to $F$ provided both $F\cs S$ and $(F\cs S)\cs\ol S$ are well-defined.
\end{assertion}

\subsection*{\bf The key stable isotopy}\label{S:key}
Assertions \ref{A:welldefined} and \ref{A:cancel} will be used in a key situation that arises in our proof of the stabilization results, showing (under suitable conditions) that the surface obtained by sliding twice with opposite orientations over a $2$-handle  can be pulled off the handle.  Let $X$ be the $4$-ball with a $2$-handle attached along a $+1$-framed slice knot $K$, and let $S$ be the $2$-sphere of self-intersection $+1$ consisting of the core of the $2$-handle together with a chosen slice disk for $K$.  We now stabilize $X$, viewing $S\subset Z := X\cs\sss$.  This changes the handlebody by adding two $2$-handles, attached along a zero-framed Hopf link $(a,b)$.  The curves $a$ and $b$ bound disks in $B^4-S$ which, when capped off with the cores of $h_a$ and $h_b$, provide a pair of dual $2$-spheres $A$ and $B$ of self-intersection zero in $Z-S$.  

Now suppose that $S$ appears as in the top left corner of the \figref{key} with two oppositely oriented parallel sheets to one side of $(a,b)$, and a ribbon to the other.    

\fig{260}{FigSlide5}{
  \put(-278,192){\footnotesize$a$}
  \put(-225,192){\footnotesize$b$}
  \put(-6,192){\footnotesize$b$}
  \put(-65,192){\footnotesize$a$}
  \put(-63,38){\footnotesize$a$}
  \put(-5,36){\footnotesize$b$}
  \put(-280,230){\small$0$}
  \put(-292,244){\small$K$}
  \put(-225,230){\small$0$}
  \put(-253,160){\small$S$}
  \put(-41,165){\small$S_{a\ol a}$}
  \put(-255,2){\small$T$}
  \put(-40,8){\small$T$}
  \put(-259,116){\footnotesize$b$}
  \put(-242,116){\footnotesize$a$}
  \put(-167,197){\footnotesize surface and}
  \put(-166,187){\footnotesize band-slides}
  \put(-28,135){\footnotesize isotopy}
  \put(-32,125){\footnotesize assertions}
  \put(-162,50){\footnotesize isotopy}
\caption{The key stable isotopy: $S_{a\ol a}$ is isotopic to $T = S^{a\ol a}$ in $X\cs\sss$}
\label{F:key}}

Sliding both sheets over $h_a$ and the ribbon over $h_b$ moves $S$ to the sphere $S_{a\ol a}$ shown in the top right corner of the figure.  As explained above, $S_{a\ol a}$ decomposes as a connected sum of the sphere $T := S^{a\ol a}$ in $X$ shown in the bottom right corner (obtained by sliding $S$ under $a$ twice with opposite orientations) with two oppositely oriented copies of the sphere $A$, that is,
$$
S_{a\ol a} \ = \ (T\cs_\sigma A) \cs_\tau\bar A
$$
for suitable arcs $\sigma$ and $\tau$.  In fact these sums are well-defined, independent of $\sigma$ and $\tau$, by \aref{welldefined}.  
To see this for $T\cs_\sigma A$, it suffices to show that the fundamental group of the complement $C = Z-(T\cup A)$ is trivial.  This group is normally generated by the meridians of $T$ and $A$, since $Z$ is simply-connected.  But the meridian of $A$ is isotopic in $C$, via the dual sphere $B$, to a product of (a pair of oppositely oriented) meridians of $T$, and so it suffices to show that the meridian $m$ of $T$ is null-homotopic in $C$.   But since $T\cdot T = 1$, the normal bundle $N$ of $T$ is a Hopf bundle, and so $m$ is in fact null-homotopic in $\partial N$, and so also in $C$.  The same proof shows $(T\cs A) \cs_\tau\bar A$ is well-defined, since $T\cs_\sigma A$ has self-intersection $1$.  

\aref{cancel} now shows that $S_{a\ol a}$ is isotopic to $T$, which is disjoint from $h_a$ (and $h_b$).  A final isotopy of the Hopf link, twisting a half turn in a right-handed upward direction, gives an alternative view of $S^{a\ol a}$ in the bottom left picture.


\subsection*{\bf Extending a diffeomorphism from the boundary of $4$-manifold}\label{S:extend}

A crucial point for us will be to show that a diffeomorphism of the boundary of a particular $4$-manifold (a blown-up cork) extends over the $4$-manifold. The idea behind finding such extensions has been known to experts~\cite{akbulut:homology,melvin:thesis} since the mid-1970s and used many times.  For the reader's convenience, we give a clear statement of the general principle; compare~\cite[\S 2.6]{akbulut:book}. To this end, define a {\it $k$-handlebody} $X$ to be a manifold with a handle decomposition with handles of index $\le k$, and write $X^{(j)}$ for the union of handles of index $\le j$.

If $H$ is a handle of index $j$, then the boundary of its cocore is a sphere, called the {\em belt sphere} of $H$, that acquires a natural framing as a submanifold of $X^{(j-1)}$. In the case at hand, where $X$ has dimension $4$ and $j=2$, this framed sphere is just the meridian of the attaching circle of the handle, and in standard pictures of $X$ would have framing $0$.

\begin{extensionlemma}\label{L:extend}
Let $X$ be a $4$-dimensional $2$-handlebody and $f:\partial X\to\partial X$ be a diffeomorphism of its boundary.  Choose a set $L$ of meridians for the $2$-handles, framed as described above.  Then $f$ extends to a diffeomorphism of $X$ if and only if the framed link $f(L) \subset \partial X$ bounds a disjoint collection of framed disks in $X$ whose tubular neighborhood has complement diffeomorphic to $X^{(1)}$.
\end{extensionlemma}

\begin{proof}
View $X$ upside down as gotten by adding $2$, $3$, and $4$ handles to $\partial X$. Use the framings to extend over the dual $2$-handles of $X$ rel $\partial X$. Then use Laudenbach-Po\'enaru~\cite{laudenbach-poenaru:handlebodies} to extend over the rest of $X$.
\end{proof}

\subsection*{\bf Stabilizing elliptic surfaces}\label{S:elliptic}
We use~\cite{gompf-stipsicz:book} as a convenient reference for complex surfaces, and in particular for the minimal elliptic surfaces $E(n)$, which give rise to many of our examples.  Recall from \cite[7.1.11]{gompf-stipsicz:book} that if $X$ and $X'$ are $4$-manifolds containing embedded tori $T$ and $T'$ with trivial normal bundles, then we can form the {\em fiber sum} 
$$
X\fcs{T}{T'}X' \ = \ (X-N)\sqcup_\phi(X'-N')
$$
where $N$ and $N'$ are tubular neighborhoods of $T$ and $T'$, and $\phi$ is a bundle map between their boundaries.  In general this construction depends on $\phi$ (see e.g.\ \cite[Example 3.2]{gompf:symplectic}), but not in the situations we will encounter.  

We will use the fact that $E(n)$ is built as a fiber sum of $n$ copies of the rational elliptic surface $E(1)\cong\bx_{1,9}$. It follows easily that 
$$
E(n)^{+-} \ \cong \ \bx_{2n,10n} \quad \textup{ (in fact it is known that $E(n)^+\cong\bx_{2n,10n-1}$) }
$$
using the {\em Mandelbaum-Moishezon trick}~\cite{mandelbaum:ACD,mandelbaum:irrational,moishezon:sums}, which states that if $X$, $X'$ and $X\fcs{T}{T'}X'$ are simply-connected with $X$ odd, then $(X \fcs{T}{T'} X') \cs \sss \cong (X \cs X') \cs 2\,\sss$.  The simplest proof of this
uses $5$-dimensional handlebody theory: $X \fcs{T}{T'} X'$ is the upper boundary of $((X\sqcup X')\times I)\cup h \,$ where $h$ is a `toral' $1$-handle, or equivalently a $1$-handle, two $2$-handles and a $3$-handle (cf.\ `round' handle theory~\cite{asimov:round}).  Adding the $1$-handle gives $X\cs X'$ on the boundary, and then each $2$-handle contributes an $\sss$ factor (since $X\cs X'$ is simply-connected and odd). 
Now turning the $3$-handle upside down so that it becomes a $2$-handle attached to the fiber sum gives the result (cf.\ \cite{auckly:stable}).   


\section{Corks in blown-up elliptic surfaces}\label{S:cork}

The work of Freedman~\cite{freedman:simply-connected} and Donaldson~\cite{donaldson:dolgachev,donaldson-kronheimer} led to the discovery of many families of exotic smooth structures on simply-connected $4$-manifolds.  Current methods for constructing these include logarithmic transforms, knot surgeries, and rational blowdowns~\cite{friedman-morgan:complex-surfaces-I,fs:knots,fs:rationalblowdown}.

We will make use of a remarkable localization of the change in smooth structure to a contractible piece, known as a cork twist.  By definition, a {\em cork} is a pair $(C,\tau)$ where $C$ is an oriented compact contractible $4$-manifold and $\tau$ is an involution on the boundary $\partial C$ that does not extend to a diffeomorphism of the full manifold (it always extends to a homeomorphism by Freedman~\cite{freedman:simply-connected}).  Given an embedding $C\hookrightarrow X$, one can remove $C$ from $X$ and then reglue it using $\tau$.  This is the associated {\em cork twist}, denoted
$$
X^\tau \ = \ (X-C^\circ)\cup_\tau C
$$
by mild abuse of notation, as it may depend on the embedding (cf.\ Akbulut-Yasui~\cite{akbulut-yasui:knotting-corks}).  Typically $X^\tau$ will not be diffeomorphic to $X$, and if this is the case for {\em some} embedding of $C$ in $X$ then we say that $(C,\tau)$ is a {\em cork in $X$}, and that the embedding is {\em effective}.  In general, different embeddings of $C$ in $X$ are considered {\em distinct} if and only if their associated cork twists are not diffeomorphic.

    
\subsection*{\bf The Akbulut-Mazur cork $\bw$}\label{S:akbulutmazur}

That cork twisting could change the smooth structure of a manifold was discovered by Akbulut~\cite{akbulut:contractible} in 1989, and it is now known that {\em any} two homeomorphic simply-connected smooth $4$-manifolds differ by a single cork twist~\cite{curtis-freedman-hsiang-stong,matveyev:h-cobordism}.  In his construction, Akbulut used a specific Mazur manifold \cite{mazur:contractible,akbulut-kirby:mazur} \,$\bw$, depicted in \figref{Mazur} from four different perspectives:

\fig{70}{FigCork}{
  \put(-365,32){$\bw \, = \, $}
  \put(-160,43){\footnotesize$\tau$}
\caption{The Akbulut-Mazur cork $(\bw,\tau)$}
\label{F:Mazur}}

\newcommand{\twistbox}[1]{{\put(8,-3){\bf ---}\put(8,2){\bf ---}}\hskip20pt{\footnotesize\boxed{#1}}{\put(0,-3){\bf ---}\put(0,2){\bf ---}}\hskip20pt}

\noindent The leftmost picture is (the mirror of) Mazur's original drawing of a pair of algebraically canceling $1$ and $2$-handles, and the others are isotopic deformations of that drawing (where the `twist box' \twistbox{k} indicates that the parallel strands entering from the left undergo $k$ full twists before exiting on the right).  Note that the boundary of $\bw$ is given by the same pictures, where the dotted circle is replaced by a $0$-framed circle.  Thus the involution $\tau$, given by a $180^\circ$ rotation about the horizontal axis in the second picture, makes sense on the boundary, where it interchanges the two surgery curves.  Since the dotted curve is the boundary of a disk in $B^4$ that has been removed, there is no obvious way to extend $\tau$ to a diffeomorphism of $\bw$, and Akbulut showed in \cite{akbulut:contractible} that in fact it does {\em not} extend.  The proof is to embed $\bw$ in a smooth $4\textup{-manifold}$ $X$ with non-trivial gauge-theoretic invariants (Akbulut used Donaldson's polynomial invariants, but now Seiberg-Witten invariants are typically used) in such a way that the corresponding invariants of $X^\tau$ vanish.  For the reader's convenience, here is more detailed sketch of this argument:

In \cite{akbulut:contractible}, Akbulut used the blown-up elliptic surface $E(2)^- = E(2)\cs\cptwobar$ as the target of his embedding of $\bw$.  Such an embedding is transparent (following the exposition in \cite{akbulut-yasui:knotting-corks}) from the rightmost drawings of $\bw$ in \figref{Mazur} and of $E(2)^-$ in \figref{E2-}.  The first drawing in \figref{E2-} is obtained from the picture of $E(2)$ in Gompf and Stipsicz's book \cite[Figure 8.16]{gompf-stipsicz:book} by a handleslide (cf.\ Figure 3 in \cite{akbulut-yasui:knotting-corks}) and a negative blowup, whose exceptional curve $e$ appears in the upper right corner.  Sliding the $+1$-framed handle $h_c$ over $h_d$ and $h_e$ yields the $-1$-framed handle $h_{c'}$ shown in the middle drawing.  Then sliding $h_a$ over $h_{c'}$ produces the $2$-handle $h_{a'}$ in the last drawing (where $a'$ is the thick figure 8) which together with the $1$-handle reproduces the last picture of $\bw$ in \figref{Mazur}.  Note that this embedding of $\bw$ is disjoint from a copy of the Gompf nucleus $N(2)$~\cite{gompf:nuclei}, given by the trefoil and its meridian in \figref{E2-}\,.

\fig{85}{FigHandlebodies}{
\put(-349,68){\footnotesize$a$}
\put(-336,68){\footnotesize$b$}
\put(-322,68){\footnotesize$c$}
\put(-319,23){\footnotesize$d$}
\put(-302,68){\footnotesize$e$}
\put(-204,68){\footnotesize$a$}
\put(-190,68){\footnotesize$b$}
\put(-175,68){\footnotesize$c'$}
\put(-16,80){\footnotesize$a'$}
\caption{Embedding the Mazur manifold $\bw$ in $E(2)^-$}
\label{F:E2-}}

The cork twist $\xtwo$ associated to this embedding of $\bw$ can be drawn by interchanging the dot on the $1$-handle with the zero-framing on $a'$, and this evidently contains an embedded $+1$ sphere -- the core of $h_b$ capped off with a disk in $B^4$ -- and several $-1$ spheres.  As a consequence, $\xtwo$ splits off an $\sss$ summand (in fact $\xtwo\cong\bx_{3,20}$~\cite{akbulut:contractible}) and so has vanishing Seiberg-Witten invariants.  Moreover, this summand is disjoint from the nucleus $N(2)$, and it follows (as first observed by Akbulut and Yasui~\cite[Theorem 1.2]{akbulut-yasui:knotting-corks}) that $\bw$ has {\em infinitely many} distinct cork embeddings in $\xtwo$; these can be constructed using knot surgeries~\cite{fs:knots} on a regular fiber in $N(2)$, appealing to the fact that such surgeries are all $1$-stably equivalent~\cite{akbulut:fs-knot-surgery,auckly:stable}.

By a completely analogous argument it is seen that $\bw$ embeds in $E(n)^-$ for all $n\ge2$ (the only change in the picture is to add more $\pm1$-framed $2$-handles and adjust the framing to $-n$ on the meridian to the trefoil, see~\cite{bizaca-gompf:elliptic,akbulut-yasui:knotting-corks}) and $\xn$ (presumably diffeomorphic to $\bx_{2n-1,10n}$) splits off an $\sss$-summand, and so has trivial Seiberg-Witten invariants.  As before, there is a Gompf nucleus $N(n)$ disjoint from this embedding and from the $\sss$ summand, and so $\bw$ has infinitely many distinct cork embeddings in $\xn$.

\subsection*{\bf The corks $\ch$ and $\chbar$}\label{S:positron} 

Many other corks have been discovered,  detected in a similar fashion~\cite{akbulut-yasui:corks-plugs} or via contact invariants of their boundaries~\cite{akbulut-matveyev:convex}.  In \figref{corks} we introduce two infinite families of corks, denoted $\ch$ and $\chbar$ for $h$ a nonpositive half-integer (strictly negative in the latter case), that give rise to infinite families of $1$-stably isotopic $2$-spheres in completely decomposable manifolds.  The boundary involution $\tau$ is indicated in the figure.  

\fig{55}{FigCorks}{
\put(-280,-14){\footnotesize$\ch$}
\put(-80,-14){\footnotesize$\chbar$}
\put(-200,40){\footnotesize$\tau$}
\put(-12,40){\footnotesize$\tau$}
\put(-285,40){\footnotesize$h$}
\put(-285,10){\footnotesize$h$}
\put(-97,40){\footnotesize$h$}
\put(-97,10){\footnotesize$h$}
\caption{Corks $\ch$ and $\chbar$}
\label{F:corks}}

\noindent The difference between $\ch$ and $\chbar$ is that the positive full twist on the left of $\ch$ becomes a negative twist in $\chbar$.  Evidently $C_0$ is the Akbulut-Mazur cork $\bw$, and $\cbar_{-1/2}$ is the {\em positron} $\bp$, introduced in~\cite{akbulut-matveyev:convex} and denoted there by $W_1$ and by $\ol{W}_1$ in~\cite{akbulut-yasui:corks-plugs}; see \figref{positron}.
(Note: When  referring to a cork $(C,\tau)$, we sometimes suppress the boundary involution $\tau$ from the notation if it is clear from the context.)   

\newpage

\fig{80}{FigPositron}{
\put(-270,35){$\bp$}
\put(-10,45){\footnotesize$\tau$}
\caption{The positron $(\bp,\tau)$}
\label{F:positron}}

That $\ch$ and $\chbar$ are indeed corks is seen as follows (or can be shown easily by the methods of \cite{akbulut-matveyev:convex}):  The cork $\ch$ embeds in the blown-up Akbulut-Mazur cork $\bw\cs |2h|\,\cptwobar$, as indicated in \figref{corkemb}.  

\fig{76}{FigCorkEmb}{
\put(-327,42){\footnotesize$h$}
\put(-327,10){\footnotesize$h$}
\caption{The embedding $\ch\subset \bw\cs |2h|\,\cptwobar$}\label{F:corkemb}}

\noindent Since $\bw$ is a cork in $E(n)\cs\cptwobar$ for every $n\ge2$, and further (negative) blowups do not kill the Seiberg-Witten invariants, it follows that $\ch$ is a cork in $E(n)\cs(|2h|+1)\cptwobar$.

\smallskip

Similarly \figref{corkbaremb} shows that $\chbar$ embeds in the blown-up positron $\bp\cs(|2h|-1)\,\cptwobar$.  

\fig{76}{FigCorkBarEmb}{
\put(-342,42){\footnotesize$h$}
\put(-342,10){\footnotesize$h$}
\caption{The embedding $\chbar\subset \bp\cs (|2h|-1)\,\cptwobar$}\label{F:corkbaremb}}

\noindent
It has been shown by Akbulut~\cite{akbulut:dolgachev} that $\bp$ embeds in the Dolgachev surface $E(1)_{2,3}$ with associated cork twist $E(1) \cong \bx_{1,9}$.  It follows that $\chbar$ is a cork in $E(1)_{2,3}\cs (|2h|-1)\,\cptwobar$, and also in $\bx_{1,8+|2h|}$.


\section{$1$-stably symmetric corks and the proofs of Theorems \ref{T:spheres} and \ref{T:links}}\label{S:symmetric}

We will prove our stable isotopy results by showing that the Akbulut-Mazur cork $\bw = C_0$ and the positron $\bp = \smash{\ol{C}_{-1/2}}$ become symmetric (in a precise sense defined below) after summing with $\cptwo$.  The argument extends with no extra effort to the families $\ch$ and $\smash{\chbar}$, and so we have drawn the pictures for the general case.

Given any cork $(C,\tau)$, the boundaries of $C$ and $C^+ = C\cs\cptwo$ are naturally identified, so $\tau$ can also be viewed as an involution on $\partial C^+$.  For the remainder of this section, we denote by $S$ the copy of $\cpone$ in the $\cptwo$ factor of $C^+$.  

\begin{definition*}\label{d:stablysymmetric}
A cork $(C,\tau)$ is {\em $1$-stably symmetric} 
if the involution $\tau$ on  $\partial C^+$ extends to a smooth automorphism (still called $\tau$) of $C^+$ so that the $2$-spheres $S$ and $T = \tau(S)$ are $1$-stably isotopic, i.e.\ smoothly isotopic in $C^+\!\!\cs\sss$.
\end{definition*}

There is no particular reason to suppose that the extension of $\tau$ is an involution, although it turns out to be so in the examples presented below. Our basic technique for constructing strictly $1$-stably isotopic spheres is  summarized in the following theorem.

\begin{theorem}\label{T:1-stable}
If $(C,\tau)$ is a $1$-stably symmetric cork in a simply-connected $4$-manifold $X$, then the spheres $S$ and $T$ in $C^+$, regarded as submanifolds of $X^+$, are
\\[3pt]
\indent {\bf a)} topologically isotopic, and 
\\[3pt]
\indent {\bf b)} strictly $1$-stably isotopic, i.e. smoothly isotopic in $X^+\#\sss$ but not in $X^+$.
\vskip5pt
\noindent Furthermore, if $(C,\tau)$ has a family of distinct cork embeddings in $X$ with associated cork twists $X_k$, for $k$ in some indexing set, then there is a corresponding family of topologically isotopic spheres $S_k$ in $X^+$ that are pairwise strictly $1$-stably isotopic.
\end{theorem}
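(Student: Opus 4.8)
The plan is to deduce both parts from the defining properties of a $1$-stably symmetric cork together with the Extension Lemma and the standard topological isotopy machinery. First I would establish part~(a). By hypothesis $\tau$ extends to a diffeomorphism (still called $\tau$) of $C^+$, and since $X$ is simply-connected and $C$ is contractible, a Mayer--Vietoris/van Kampen argument shows $X^+ - C^+$ is simply-connected; hence by Freedman's theorem and the work of Perron and Quinn (cited in the introduction) any two $2$-spheres in $X^+$ with simply-connected complement representing the same homology class are topologically isotopic. The spheres $S$ and $T=\tau(S)$ lie in $C^+$, their complements in $X^+$ are simply-connected (they contain the simply-connected $X^+-C^+$ and the complement of a sphere of square $\pm1$ in $C^+$), and $[S]=[T]$ because $\tau$, being the identity outside a contractible piece, acts trivially on $H_2(X^+)$. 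This gives~(a). One subtlety to check is that $\tau$ really does act trivially on homology: it is a diffeomorphism of $C^+$ restricting to a given involution on $\partial C^+$, and since $H_2(C^+)\cong\bz$ is generated by $S$ with $S\cdot S=+1$, the only possibilities are $\tau_*(S)=\pm S$; the $+$ sign follows from the orientation conventions (or from the fact that $\tau$ extends the boundary involution which is orientation preserving).

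Part~(b) splits into the positive statement and the negative statement. For the positive statement, ``smoothly isotopic in $X^+\cs\sss$'': the cork $(C,\tau)$ being $1$-stably symmetric means $S$ and $T$ are smoothly isotopic inside $C^+\cs\sss$, and this isotopy, being supported in $C^+\cs\sss$, takes place inside $X^+\cs\sss = (X^+-C^{+\circ})\cup(C^+\cs\sss)$; thus $S$ and $T$ are smoothly isotopic in $X^+\cs\sss$. For the negative statement, ``not smoothly isotopic in $X^+$'': suppose they were, hence (a fortiori) smoothly equivalent; any ambient diffeomorphism of $X^+$ carrying $S$ to $T$ would, after blowing down $S$ on one side and $T$ on the other, yield a diffeomorphism $X\to X^\tau$ where $X^\tau$ is the cork twist. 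But $(C,\tau)$ being an \emph{effective} cork in $X$ (part of the definition of a cork \emph{in} $X$) means $X\not\cong X^\tau$, a contradiction. The one point that needs care here is the identification ``$X^+/T \cong X^\tau$'': blowing down $T=\tau(S)$ in $X^+=X\cs\cptwo$ amounts to removing the $+1$-sphere $T$ and recognizing what is left; since $\tau$ is supported in $C^+$ and carries the exceptional sphere $S$ to $T$, one checks directly from the handle pictures (interchanging the dot with the $0$-framing, as in the Akbulut--Mazur discussion) that $X^+/T$ is obtained from $X$ by cutting out $C$ and regluing via $\tau$, i.e. it is $X^\tau$. I expect this identification to be the main obstacle — it is the step where the geometry of the blowup, the cork twist, and the boundary involution all have to be matched up carefully, and it is where one must be honest about the ``mild abuse of notation'' in the definition of $X^\tau$ and about which embedding of $C$ one is using.

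For the final assertion, I would simply run the above argument in parallel for each embedding. Given a family of distinct cork embeddings of $(C,\tau)$ in $X$ with cork twists $X_k$, let $S_k$ denote the sphere $S$ in $C^+$ viewed in $X^+$ via the $k$-th embedding (and $T_k=\tau(S_k)$). Each $S_k$ (and each $T_k$) is topologically isotopic to the exceptional sphere $\cpone\subset X^+$ by part~(a) applied to the $k$-th embedding, hence the $S_k$ are pairwise topologically isotopic to one another. For distinct $k,\ell$, the spheres $S_k$ and $S_\ell$ (equivalently $S_k$ and $T_\ell$, since $S_\ell$ and $T_\ell$ are $1$-stably isotopic, hence topologically isotopic, hence we may as well compare $S_k$ with $T_\ell$) would be smoothly isotopic in $X^+$ only if $X_k\cong X_\ell$, which fails since the embeddings are distinct; and after one stabilization all of $S_0,T_0,S_1,T_1,\dots$ become smoothly isotopic to $\cpone$ in $X^+\cs\sss$ by the $1$-stably symmetric property applied to each embedding, hence to one another. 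Thus the $S_k$ are pairwise strictly $1$-stably isotopic, completing the proof.
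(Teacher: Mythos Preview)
Your arguments for parts (a) and (b) are essentially the paper's own proof, including the key identification $X^+/T\cong X^\tau$; the paper writes this out as
\[
X^+/T \ = \ (X-C^\circ)\cup_{\mathrm{id}}(C^+/T) \ \cong \ (X-C^\circ)\cup_\tau(C^+/S) \ = \ (X-C^\circ)\cup_\tau C \ = \ X^\tau,
\]
which is exactly the computation you anticipated would be the delicate step.

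The final assertion, however, has a genuine gap. Your family $\{S_k\}$ consists of the exceptional spheres coming from the various embeddings of $C$, and these are all \emph{already smoothly isotopic in $X^+$}: blowing up $X$ at any two points gives canonically diffeomorphic manifolds via a diffeomorphism isotopic to the identity, carrying one exceptional sphere to the other. In particular $X^+/S_k\cong X$ for every $k$, so your claim that ``$S_k$ smoothly isotopic to $S_\ell$ only if $X_k\cong X_\ell$'' is simply false. The parenthetical attempt to replace $S_\ell$ by $T_\ell$ does not rescue this: $S_\ell$ and $T_\ell$ being topologically (or even $1$-stably) isotopic does not make ``$S_k$ smoothly isotopic to $S_\ell$'' equivalent to ``$S_k$ smoothly isotopic to $T_\ell$'' --- indeed the former is true and the latter is false.

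The fix is to use the \emph{twisted} spheres as your family. The paper isotopes the embeddings so that the $C_k$ share an interior point, blows up there to get a single exceptional sphere $S$ lying in every $C_k^+$, and sets $S_k=\tau_k(S)$. Then $X^+/S_k\cong X^{\tau_k}=X_k$, so distinct $k$ give smoothly inequivalent spheres; and each $S_k$ is $1$-stably isotopic to the common $S$, hence all $S_k$ are pairwise $1$-stably isotopic. Your own $\{T_k\}$ would work equally well by the same reasoning (since your $S_k$ are all smoothly isotopic to one another, hence to a common $S$), so the repair is just to swap the roles of $S_k$ and $T_k$ in your last paragraph.
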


\begin{proof}
We sketch the standard argument for {\bf a)}; compare~\cite{lee-wilczynski:flat,kim-ruberman:surfaces,sunukjian:surfaces}.  Since $S$ and $T$ are embedded in $C^+$ with simply-connected complements, they are homologous, from which it follows (using Freedman~\cite{freedman-quinn}) that there is a self-homeomorphism of $X^+$ taking $S$ to $T$.  This homeomorphism may be assumed to be the identity on $H_2(X^+)$, so the theorem of Perron~\cite{perron:isotopy2} and Quinn~\cite{quinn:isotopy} implies that it is topologically isotopic to the identity, yielding a topological ambient isotopy taking $S$ to $T$.

To prove {\bf b)}, note that since $(C,\tau)$ is $1$-stably symmetric, the spheres $S$ and $T$ are (smoothly) isotopic in $C^+\!\!\cs\sss$, and hence also in  $X^+\!\!\cs\sss$.  But they are not isotopic in $X^+$, since blowing down $S$ in $X^+$ yields $X$ while blowing down $T$ yields $X^\tau$:
$$
\begin{aligned}
X^+/\,T \ &= \ (X - C^\circ) \cup_{\textup{id}} (C^+/\,T) \\ 
&\cong \ (X - C^\circ) \cup_{\tau} (C^+/\,S) \ = \ (X - C^\circ) \cup_{\tau} C \ = \  X^\tau\,.
\end{aligned}
$$
Since $C$ is a cork in $X$, the manifolds $X^+/\,S$ and $X^+/\,T$ are not diffeomorphic, so there is no diffeomorphism of  $X^+$ taking $S$ to $T$.  (In principle, this is a stronger statement than saying that $S$ and $T$ are not isotopic, but we don't know an example to illustrate the difference.)

We now prove the last assertion.  By hypothesis, there are embeddings $C_k\subset X$ of copies of $C$ with $X^{\tau_k}\cong X_k$, where $\tau_k:\partial C_k\toself$ is the involution corresponding to $\tau$.  After suitable isotopies, we may assume that the intersection of the $C_k$'s has nonempty interior, and a positive blowup of a point there yields a sphere $S\subset X^+$ of self-intersection $1$ that lies in each $C_k$.  Since $C$ is $1$-stably symmetric, $\tau_k$ extends over $C_k^+\subset X^+$, and we set 
$$
S_k \ = \ \tau_k(S) \ \subset \ X^+.
$$  
Now the argument that the $S_k$ are topologically isotopic and pairwise strictly $1$-stably isotopic proceeds exactly as in {\bf a)} and {\bf b)}, noting that $X^+/S_k \cong X^{\tau_k} \cong X_k$. \end{proof}

\begin{theorem}\label{T:stablysymmetric}
The corks $\ch$ and $\chbar$ are $1$-stably symmetric.
\end{theorem}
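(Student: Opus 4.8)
The goal is to show that for each non-positive half-integer $h$ (strictly negative for $\chbar$), the cork $\ch$ (resp.\ $\chbar$) becomes $1$-stably symmetric: the boundary involution $\tau$ extends over $\ch^+ = \ch\cs\cptwo$ so that the exceptional sphere $S = \cpone$ and its image $T = \tau(S)$ are isotopic in $\ch^+\cs\sss$. I would carry this out in two stages. First, find the extension of $\tau$ over $\ch^+$ by applying the Extension Lemma~\lemref{extend}: draw a handle picture of $\ch^+$ (the cork picture of \figref{corks} with a $+1$-framed unknot adjoined for the $\cptwo$ summand), compute the framed meridians $L$ of the $2$-handles, apply $\tau$ to $\partial\ch^+$ (the $180^\circ$ rotation swapping the two surgery curves), and exhibit framed disks in $\ch^+$ bounded by $\tau(L)$ whose complement is the $1$-handlebody. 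The blowup is precisely what makes this possible — without the $\cptwo$ summand there is no such extension (that is the content of $\ch$ being a cork). Second, and this is the crux, track the exceptional sphere $S$ through this diffeomorphism to get an explicit handle picture of $T = \tau(S)$ in $\ch^+$, then verify $S$ and $T$ become isotopic after one further stabilization.

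**Key steps.** (1) Produce the handlebody picture of $\ch^+$ and verify, via the Extension Lemma, that $\tau$ extends; record the extension concretely as a sequence of handle slides and isotopies so that it can be applied to surfaces. (2) Using the surface-slide, band-slide, and tube-creation machinery of Section~\ref{S:prelim}, compute the effect of the extended $\tau$ on $S$. The natural outcome — given the form of the boundary involution, which reflects and re-twists the cork — is that $T$ picks up, relative to $S$, a pair of oppositely oriented parallel sheets running over one of the canceling $1$-/$2$-handle pairs, exactly the local model of ``The key stable isotopy'' subsection. (3) Stabilize once more, identify the configuration of $S$ (or $T$) near the new $\sss$ handles with the top-left picture of \figref{key}, i.e.\ $S$ is built from a $+1$-framed slice knot together with its slice disk, with $(a,b)$ a zero-framed Hopf link and the two sheets/ribbon positioned as required. (4) Apply the key stable isotopy: $S_{a\ol a} = (T\cs_\sigma A)\cs_\tau\bar A$ is well-defined by \aref{welldefined} (the complement is simply-connected because $T\cdot T = 1$ forces the meridian of $T$ to be null-homotopic, and the dual sphere $B$ trades the meridian of $A$ for meridians of $T$), and then \aref{cancel} cancels the two $\bar A$, $A$ summands, yielding that $S$ and $T$ are isotopic in $\ch^+\cs\sss$. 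The argument is uniform in $h$: the extra $2h$ full twists in the cork only change the ambient picture in a region disjoint from the local model where the key isotopy takes place, so no separate computation is needed for each $h$ (likewise the sign of the twist distinguishing $\ch$ from $\chbar$ is immaterial to this step).

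**The main obstacle.** The delicate point is step (2)–(3): arranging the handle picture so that after applying the extended $\tau$ and then stabilizing, the sphere $S$ literally sits in the position of the top-left frame of \figref{key} — two oppositely oriented parallel sheets on one side of a zero-framed Hopf link and a ribbon on the other, with the rest of $S$ being the core of a $+1$-framed $2$-handle capped by a slice disk. This requires a careful, explicit handle calculus computation: choosing the right meridian disks in the Extension Lemma, then chasing $S$ through the resulting slides without losing control of the framings and the ribbon structure. Once the picture is in that normal form, invoking the key stable isotopy is immediate, and combining with \thmref{1-stable} (together with the cork embeddings of \figref{corkemb} and \figref{corkbaremb}) will yield Theorems~\ref{T:spheres} and~\ref{T:links}. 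I expect the proof in the paper to proceed by presenting these handlebody pictures in sequence, with the verification of each slide left to the reader's inspection of the figures.
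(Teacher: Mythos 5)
Your outline has the right ingredients --- blow up, extend $\tau$ via the Extension Lemma, stabilize, and invoke the key stable isotopy --- and you correctly flag steps (2)--(3) as the crux. But the paper negotiates that crux by a device missing from your plan, and without it the handle calculus you defer to ``careful, explicit handle calculus computation'' would be very hard to control. The central organizing idea is to pass to a \emph{symmetric model}: the manifold $\rh$ obtained by attaching a $+1$-framed $2$-handle to $B^4$ along a ribbon knot $\kh$ that visibly admits a $180^\circ$ rotational symmetry $\tau$. In $\rh$ the extension of $\tau$ is free (it is a linear involution of $B^4$, extended in the standard way over the symmetric $2$-handle), and $\kh$ carries a $\tau$-symmetric pair of ribbon disks $\ah$, $\bh$ whose capped-off spheres $\sh$ and $\tsh = \tau(\sh)$ are completely explicit. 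The real work is then isolated in Lemma~\ref{L:ribbon}: an explicit diffeomorphism $\rho\colon\ch^+\to\rh$, $\tau$-equivariant on the boundary and carrying $S$ to $\sh$, built as $\rho_3\circ\rho_2\circ\rho_1$ through a blowdown/blowup, a $\tau$-equivariant isotopy of attaching maps, and a handle cancellation, with the Extension Lemma invoked at the intermediate stages to confirm that the boundary diffeomorphisms extend. In other words, rather than extend $\tau$ directly over $\ch^+$ and then chase $S$ through that extension (your step (2)), the paper identifies $\ch^+$ $\tau$-equivariantly with a model in which both the extension and the image sphere are already in plain view, so there is nothing to chase.

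The final step also has a different shape from the one you anticipate. You expect the extended $\tau$ to make $T$ differ from a fixed $S$ by a cancelling pair of tubes, which the key isotopy then removes. In the paper, instead, one stabilizes $\rh$ and slides the $+1$-framed handle along $\kh$ four times over one component of the zero-framed Hopf link, converting $\kh$ to an unknot; the key stable isotopy of \secref{key} then strips the resulting tubes from \emph{both} $\sh$ and $\tsh$ simultaneously, leaving each as a disk in $S^3$ bounded by that unknot. Since any two such disks are isotopic rel boundary, $\sh$ and $\tsh$ are isotopic in $\rh\cs\sss$, hence $S$ and $T$ are isotopic in $\ch^+\cs\sss$ via $\rho$. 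The argument for $\chbar$ is identical with $\kh$ replaced by $\khbar$. So your proposal is correct in spirit but is missing the model $\rh$; supplying it is essentially the content of the proof.
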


\begin{proof}
The arguments for $\ch$ and for $\chbar$ are effectively the same, so we will treat the former in detail, and only briefly discuss the latter at the end of the proof. 

We first show that the involution $\tau$ on $\partial\ch$ extends after a positive blowup. This fact is well known for the Akbulut-Mazur cork $C_0$ (and implicit in \cite{akbulut-kirby:mazur}) but we will give a careful argument so that we may draw the spheres, labeled $\sh$ and $\tsh = \tau(\sh)$, in a suitable picture of $\ch^+ = \ch\cs\cptwo$.  This will allow us to explicitly construct the required isotopy between $\sh$ and $\tsh$ in $\ch^+\cs\sss$.  

Our initial picture of $\ch$ (Figure~\ref{F:corks}) has the evident boundary involution $\tau$ given by $180^\circ$ rotation about the midlevel horizontal axis.  Our final picture of $\ch^+$ will have the analogous boundary involution, that we continue to label $\tau$, whose extension will be apparent from the picture.  As the bounding $3$-manifolds $\partial\ch^+$ carry more than one involution~\cite{bonahon-siebenmann,henry-weeks,kodama-sakuma}
we will be careful to verify that the sequence of handle slides that carry one picture to the other are $\tau$-equivariant on the boundary, so that these involutions may be identified.   

Let $\kh$ be the knot in $S^3$ drawn in \figref{ribbon} below, and $\rh$ be the $4$-manifold obtained from $B^4$ by attaching a $+1$ framed $2$-handle along $\kh$.  The indicated rotational symmetry $\tau$ of $\kh$ is induced by a linear involution of $B^4$ that extends in a standard fashion to $\rh$.

\fig{75}{FigRibbon}{
\put(-20,45){\small$\tau$}
\put(-133,60){\footnotesize$h-\frac12$}
\put(-133,12){\footnotesize$h-\frac12$}
\caption{The ribbon knot $K_h$}
\label{F:ribbon}}

\noindent Observe that $\kh$ is a ribbon knot.  In particular, it bounds two obvious immersed ribbon disks $\ah$ and $\bh$ in $S^3$, with $\tau(\ah) = \bh$.  These disks are drawn on the left and right side of \figref{sh} for the case when $h$ is an integer, each appearing as a pair of stacked disks, twisted in the middle and joined by a single ribbon.   Resolving the singularities of $\ah$ and $\bh$ by pushing their interiors into $B^4$, and then capping off with the core of the $2$-handle, we obtain a pair of homologous embedded $2$-spheres $\sh$ and $\tsh$ in $\rh$ with $\tsh = \tau(\sh)$, specified by the same pictures by our drawing conventions. 

\fig{70}{FigRibbonDiskNew}{
\put(-412,12){\footnotesize$A_h$}
\put(-193,12){\footnotesize$B_h$}
\put(-266,2){\large$S_h$}
\put(-45,4){\large$T_h$}
\put(-270,62){\small$1$}
\put(-50,62){\small$1$}
\put(-341,55){\footnotesize$h-\frac12$}
\put(-341,12){\footnotesize$h-\frac12$}
\put(-124,55){\footnotesize$h-\frac12$}
\put(-124,12){\footnotesize$h-\frac12$}
\caption{The ribbon disks $A_h, B_h$ and associated spheres $\sh, \tsh$ in $\rh$}
\label{F:sh}}

\vskip-20pt
\vskip-20pt
   
\begin{lemma}\label{L:ribbon}
There is a diffeomorphism $\rho:\ch^+ \to \rh$ that is $\tau$-equivariant on the boundary and that maps $S\subset\ch^+$ to the sphere $\sh\subset\rh$ shown on the left in \textup{\figref{sh}}.
\end{lemma}

Before proving the lemma, observe that it yields the desired extension of $\tau$ over $\ch^+$.  Simply conjugate the extension of $\tau$ over $\rh$ by $\rho$.  Furthermore, the last assertion in the lemma shows that the triple $(\rh,\sh,\tsh)$ is diffeomorphic to $(\ch^+,S,T)$, so the proof of the theorem will be complete once we produce an isotopy between $\sh$ and $\tsh$ in $\rh\cs\sss$.  To accomplish this, we apply the `key stable isotopy' shown in \figref{key} and simultaneously keep track of the motion of $\sh$ and $\tsh$.  The stabilization adds a pair of $2$-handles attached along a zero-framed Hopf link.  We then slide the $+1$ framed handle attached to the knot $\kh$ four times over one of these $2$-handles, moving $\kh$ to an unknot in $S^3$, and $\sh$ and $\tsh$ to a pair of disks in $S^3$ bounded by this unknot, as shown in \figref{isotopy} after a further isotopy to remove canceling intersections of the ribbons with the stacked disks.  Since any two disks bounding an unknot in $S^3$ are isotopic rel boundary, this yields the desired isotopy.  Note that the initial position of each of the surfaces $S_h$ after sliding $K_h$ over the Hopf link is far more complicated; the  point of the isotopy in \figref{key}\, is to achieve the simple position shown in \figref{isotopy}.

\fig{70}{FigIsotopyNew}{
\put(-350,6){$\sh$}
\put(-10,6){$\tsh$}
\put(-85,55){\footnotesize$h-\frac12$}
\put(-85,12){\footnotesize$h-\frac12$}
\put(-272,55){\footnotesize$h-\frac12$}
\put(-272,12){\footnotesize$h-\frac12$}
\caption{$\sh$ and $\tsh$ in $\rh\cs\sss$ after the key stable isotopy}
\label{F:isotopy}}

It remains to prove the lemma.  We will first define the diffeomorphism $\rho:\ch^+\to\rh$ as the composition of three explicit intermediate diffeomorphisms $\rho_1,\rho_2,\rho_3$\,, each $\tau$-equivariant on the boundary, as indicated in the commutative diagram in \figref{rho}.  (For the moment, ignore the $1$-handles given by the small dotted circles.)  We will then verify that $\rho(S) = \sh$.  

\fig{200}{FigCorkRibbonNew}{
\put(-77,45){\footnotesize$h-\frac12$}
\put(-77,10){\footnotesize$h-\frac12$}
\put(-77,185){\footnotesize$h-\frac12$}
\put(-77,142){\footnotesize$h-\frac12$}
\put(-334,50){\footnotesize$h$}
\put(-334,10){\footnotesize$h$}
\put(-347,182){\footnotesize$h$}
\put(-347,142){\footnotesize$h$}
\put(-236,51){\footnotesize$\mu$}
\put(-245,32){\footnotesize$\nu$}
\put(-270,186){\footnotesize$\mu$}
\put(-224,167){\footnotesize$\nu$}
\put(-324,100){$\rho_1$}
\put(-295,105){blowdown}
\put(-297,95){and blowup}
\put(-55,95){$\rho_3$}
\put(-112,100){cancel}
\put(-115,90){$1/2$-pair}
\put(-190,20){$\rho_2$}
\put(-185,170){$\rho$}
\put(-205,55){equivariant}
\put(-195,45){isotopy}
\caption{The diffeomorphism $\rho = \rho_3\circ\rho_2\circ\rho_1:\ch^+\to\rh$}
\label{F:rho}}

\vskip-10pt
Here are the details.  To define $\rho_1$, draw the handlebody picture for the blown up cork $\ch^+$ by adding an unlinked $+1$-framed unknot to the picture of $\ch$ in \figref{corks}, as shown in the upper left corner of \figref{rho}.  Alternatively, one could add a linking $+1$-framed circle to cancel the left half twists on the top and bottom, as shown in the lower left corner.  Now there is a $\tau$-equivariant diffeomorphism $\rho_1$ between the boundaries of these two handlebodies, given by blowing down the unlinked $+1$ and then blowing up the linked $+1$.  This map carries the $0\textup{-framed}$ meridians $\mu,\, \nu$ of the $2$-handles on top to the correspondingly labeled curves on the bottom.  Putting dots on these curves, i.e.\ treating them as $1$-handles, corresponds to removing tubular neighborhoods of the trivial disks that they bound in $B^4$.  These $1$-handles clearly cancel the $2$-handles, leaving a single $1$-handle in both cases, so by the Extension \lemref{extend}\,, $\rho_1$ extends to a diffeomorphism between the two handlebodies.

Next define $\rho_2$ as the end of a $\tau$-equivariant isotopy of the attaching maps, as detailed in \figref{equivariant}.  The first step $\smash{\rho_2^\circ}$, which may be hard to visualize, is broken down in \figref{detail}.       

\fig{55}{FigCorkIsotopyNew}{
\put(-149,40){\footnotesize$h$}
\put(-149,10){\footnotesize$h$}
\put(-262,41){\footnotesize$h$}
\put(-261,10){\footnotesize$h$}
\put(-396,42){\footnotesize$h$}
\put(-396,10){\footnotesize$h$}
\put(-56.5,40){\footnotesize$h-\frac12$}
\put(-56.5,11){\footnotesize$h-\frac12$}
\put(-315,15){$\rho_2^\circ$}
\caption{$\rho_2$: isotope the attaching maps}
\label{F:equivariant}}

\fig{75}{FigCorkIsotopyDetail}{
\put(-200,62){$\rho_2^\circ$}
\caption{Breakdown of $\rho_2^\circ$}
\label{F:detail}}

Finally define $\rho_3$ by canceling the $1$-handle and the $0$-framed $2$-handle, i.e.\ slide the upper `band' of the $+1$ circle over the $1$-handle and the lower band over the $0$-framed $2$-handle in an equivariant fashion, and then cancel the $1/2$-handle pair to get $\rh$; see the righthand side of \figref{rho}\,, ignoring the dotted meridians as before.  Tracking the meridians as with $\rho_1$ confirms, using the Extension \lemref{extend}\,, that this process defines a diffeomorphism $\rho_3$ that is $\tau$-equivariant on the boundary.

It remains to verify that $\rho(S) = \sh$, where $S$ is the exceptional sphere in the blowup $\ch^+$.  Clearly $S$ can be viewed as the union of the core of the $+1$-framed $2$-handle in the top left drawing in \figref{rho} with a trivial spanning disk for its attaching circle), as shown in the top left corner of \figref{rhosphere}.  Now we simply track $S$ through the rest of the diagram in \figref{rho}\,, as shown in \figref{rhosphere}.  In particular, \figref{detailsphere} provides the details of the transformation of $S$ through the sequence of moves in \figref{detail}\,.

\fig{200}{FigFollowSphere}{
\put(-79,55){\footnotesize$h-\frac12$}
\put(-79,10){\footnotesize$h-\frac12$}
\put(-78,185){\footnotesize$h-\frac12$}
\put(-78,142){\footnotesize$h-\frac12$}
\put(-338,50){\footnotesize$h$}
\put(-338,13){\footnotesize$h$}
\put(-351,181){\footnotesize$h$}
\put(-351,142){\footnotesize$h$}
\put(-326,100){$\rho_1$}
\put(-61,95){$\rho_3$}
\put(-195,20){$\rho_2$}
\put(-185,170){$\rho$}
\put(-240,177){$S$}
\put(-5,180){$S_h$}
\caption{Tracking the sphere to show $\rho(S) = S_h$}
\label{F:rhosphere}}

\fig{75}{FigSurfaceDetail}{
\put(-200,62){$\rho_2^\circ$}
\caption{Tracking the sphere through $\rho_2^\circ$}
\label{F:detailsphere}}

This proves \lemref{ribbon}\,, and hence \thmref{stablysymmetric}\,, for $\ch$.  The argument for $\chbar$  is  completely analogous, except that the knot $\kh$ is replaced by the knot $\khbar$ shown in \figref{ribbonbar}\,.

\fig{75}{FigRibbonBar}{
\put(-20,47){\small$\tau$}
\put(-112,60){\footnotesize$h$}
\put(-112,12){\footnotesize$h$}
\caption{The ribbon knot $\khbar$}
\label{F:ribbonbar}}
\vskip-.3in
\end{proof}



We can now prove our main stabilization results about spheres.

\subsection*{\bf Proof of \thmref{spheres}}\label{S:A}

By  \thmref{stablysymmetric}\,, the Akbulut-Mazur cork $\bw = C_0$ and the positron $\bp = \ol C_{-1/2}$ are $1$-stably symmetric corks.  Thus by \thmref{1-stable}, it suffices to show that for the stated values of $p$ and $q$,  either $\bw$ has infinitely many distinct cork embeddings (when $p\ge4$) or $\bp$ has an effective cork embedding (when $p=2$) in some $4$-manifold $X$ with $X^+ \cong \xpq$.  But by \cite{akbulut-yasui:knotting-corks}, $\bw$ has infinitely many distinct embeddings in $\xn$ (see \secref{cork}), and $E(n)^{-{\tau}+}\cong E(n)^{-+}$ (since $\tau$ extends to $\bw^+$) which is diffeomorphic to $\bx_{2n,10n}$ by the Mandelbaum-Moishezon trick, as noted at the end of \secref{prelim}.  Since manifolds distinguished by their Seiberg-Witten invariants remain distinct after summing with $\cptwobar$~\cite{fs:sw-blowup}, this proves the theorem when $p\ge4$.  Similarly by \cite{akbulut:dolgachev}, $\bp$ is a cork in the Dolgachev surface $E(1)_{2,3}$ (see the end of \secref{cork}) and $E(1)_{2,3}^+ \cong \bx_{2,9}$ \cite{moishezon:sums}.  The proof for  $p=2$ now follows from the blowup formula for Seiberg-Witten invariants.
{\LARGE \hfill\ensuremath{\square}}

\subsection*{\bf Disks versus spheres}\label{S:pi1}

Viewing the spheres $\sh$ and $\tsh$ in the model for the blown up cork $\ch^+$ shown in the bottom right corner of \figref{rhosphere} provides good intuition into the result that we just proved.   In this model $\sh$ is plainly visible, as shown in the figure, while $\tsh$ will appear as the rotated image of $\sh$ with a pair of tubes added (capped off over the zero-framed $2$-handle) to avoid the disk bounding the dotted circle that was removed.  This disk evidently obstructs the isotopy of $\tsh$ to the $\sh$. When one stabilizes, the effect is to change the dotted circle to a zero-framed circle. This fills in the missing disk and the key isotopy of \secref{key} may then be used.

Even though the meridian of the disk associated to the dotted circle does not bound a smooth embedded disk, it does bound an smoothly immersed disk $D$ since the complement of $\tsh$ is simply-connected.  Presumably $D$ can be used to construct a Casson handle, which in turn can be used to construct a topological isotopy between the pair of spheres. 

One may wonder if this behavior already takes place at the level of the two ribbon disks $A_h$ and $B_h$ for the knot $K_h$ shown in \figref{sh}, i.e.\ these disks are distinct, but perhaps they become isotopic after stabilizing with $\sss$. The answer is no.  This is shown in the following proposition, and demonstrates the need for care when performing stabilization by replacing a dotted circle by a zero framed circle.

\begin{proposition}
The two ribbon disks $A_h,\,B_h \subset B^4$ for $K_h$ remain non-isotopic relative to the boundary after any number of stabilizations of $B^4$.  In fact, for any simply-connected $4$-manifold $X$, no (smooth) automorphism of $B^4\cs X$ extending the identity on the boundary will carry $A_h$ to $B_h$. 
\end{proposition}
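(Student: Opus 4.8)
The plan is to distinguish $A_h$ and $B_h$ by a classical invariant of the two-component links obtained by doubling the ribbon disks, namely the fundamental groups of the double branched covers (or equivalently, the knot groups of the associated ribbon knots in $S^4$), which are insensitive to stabilization. Concretely, pushing the interior of $A_h$ into $B^4$ and gluing two copies along $K_h\subset S^3$ produces a $2$-knot $\mathcal{A}_h\subset S^4$; similarly $B_h$ gives a $2$-knot $\mathcal{B}_h$. A rel-boundary isotopy between $A_h$ and $B_h$ in $B^4$ (or in $B^4\cs X$) would produce a diffeomorphism of $S^4$ (resp.\ of the closed manifold obtained by doubling, which is $\#^{?}\,\sss$-type but the point is it is simply-connected) carrying $\mathcal{A}_h$ to $\mathcal{B}_h$; in particular their knot groups $\pi_1(S^4-\mathcal{A}_h)$ and $\pi_1(S^4-\mathcal{B}_h)$ would be isomorphic. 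So the first step is to compute these two groups from the handle pictures in \figref{sh} and show that they are \emph{not} isomorphic (e.g.\ one is abelian $\cong\bz$ and the other is not, or they have non-isomorphic abelianizations of commutator subgroups, or differ in some Alexander-module invariant). Since the complement of a surface is unchanged up to homotopy equivalence by connected-summing the ambient manifold with a simply-connected $X$ away from the surface, and $\pi_1$ of the complement only sees a collar of the surface, the group invariant is genuinely stabilization-proof: this handles the ``In fact\ldots'' clause for free.

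Here are the steps in order. (1) Read off from \figref{sh} a Wirtinger-type presentation for $\pi_1(B^4-A_h)$ and $\pi_1(B^4-B_h)$: since $A_h$ is built from two stacked disks joined by one ribbon with $h-\tfrac12$ twists in the middle, the group is generated by two meridians $x,y$ (one per stacked disk) with a single relation coming from the ribbon move, and one should get a presentation like $\langle x,y \mid x = w y w^{-1}\rangle$ for an explicit word $w=w(h)$ recording the twisting. (2) Do the same for $B_h$, getting $\langle x,y\mid x = w' y w'^{-1}\rangle$ with $w'$ the $\tau$-image word. (3) Now cap off with the $2$-handle along $K_h$ (framing $+1$), which adds a relation killing the longitude; equivalently, pass to the $2$-knot group. (4) Show the resulting two groups are non-isomorphic — I expect the cleanest route is to compute the Alexander polynomial (order of the Alexander module $H_1$ of the infinite cyclic cover), which is a presentation invariant computable by Fox calculus from the single relator, and verify $\Delta_{A_h}(t)\neq \Delta_{B_h}(t)$ up to units; the twisting asymmetry between $\ch$ and $\chbar$-type constructions should make these polynomials differ, say $\Delta_{A_h}=ht^2-(2h\pm1)t+h$ versus its mirror, or something with the sign of the twist visibly encoded. (5) Finally, spell out the reduction: an automorphism $\Phi$ of $B^4\cs X$ fixing $\partial$ and carrying $A_h\to B_h$ restricts to a homotopy equivalence of complements; since $X$ is simply-connected and the complement of $A_h$ in $B^4\cs X$ has the same $\pi_1$ as the complement in $B^4$ (the $X$-summand is added in a ball disjoint from a neighborhood of $A_h$, contributing nothing to $\pi_1$ by van Kampen), we get $\pi_1(B^4-A_h)\cong\pi_1(B^4-B_h)$, and likewise for the doubled $2$-knot groups — contradiction with step (4).

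The main obstacle I anticipate is step (4): actually proving the two knot groups (or Alexander invariants) are \emph{different} rather than just plausibly different. The Fox-calculus computation from the explicit relator is routine but one must correctly translate the ``$h-\tfrac12$ twist box'' into a relator word — getting the precise word $w(h)$, and being careful that the ribbon move's orientation data matches the figure, is where errors creep in. A secondary subtlety is making sure the stabilization really is ``away from'' the disk: one should note that $A_h\subset B^4$ is compact, choose the connected-sum ball for $X$ in the complement of a tubular neighborhood of $A_h$, and invoke $\pi_1$-invariance via van Kampen; this is easy but worth stating so the ``any simply-connected $X$'' clause is airtight. If the Alexander polynomials happen to coincide (which would be unfortunate), the fallback is to use the Blanchfield pairing or the second Alexander module, or simply to observe that one disk complement has nontrivial metabelian quotient while the other does not, extracted from the relator word directly.
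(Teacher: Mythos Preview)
There is a genuine gap: your invariants cannot distinguish $A_h$ from $B_h$. The two ribbon disks are swapped by the involution $\tau$ of $B^4$ (the $180^\circ$ rotation in \figref{ribbon}), so the pairs $(B^4,A_h)$ and $(B^4,B_h)$ are diffeomorphic; hence $\pi_1(B^4-A_h)\cong\pi_1(B^4-B_h)$, their Alexander modules coincide, and the doubled $2$-knots $\mathcal{A}_h$, $\mathcal{B}_h$ in $S^4$ are equivalent. Every invariant of the abstract complement (or of the doubled $2$-knot) is therefore the same for both disks, so step~(4) must fail, as will each of your proposed fallbacks. The content of the proposition is precisely that $A_h$ and $B_h$ are \emph{equivalent but not rel-boundary isotopic}, so one needs an invariant that remembers the boundary identification.

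The paper's proof exploits exactly this peripheral data. One assumes a rel-boundary automorphism $\phi$ carrying $A_h$ to $B_h$, and composes with $\tau\cs\idx$ to obtain a diffeomorphism $\rho$ that \emph{preserves} $A_h$ but restricts to $\tau$ on $S^3$. This forces $\rho_*$ to be an automorphism of $\pi_1(B^4-A_h)$ that restricts to $\tau_*$ on $\pi_1(S^3-K_h)$. Now one identifies two explicit curves $a,b\subset S^3-K_h$ swapped by $\tau$, and checks via the handle picture of the disk complement that $b=1$ while $a=xy\neq 1$ in $\pi_1(B^4-A_h)=\langle x,y\mid xy=(yx)^2\rangle$ (witnessed by a surjection to $S_3$). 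Since $\rho_*$ would have to carry $a$ to a conjugate of $b$, this is a contradiction. The moral: compute one group, not two, but track which boundary curves die in it.
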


\begin{proof}
Assume to the contrary that such an automorphism $\phi$ of $B^4\cs X$ exists.  We will show by fundamental group considerations that this leads to a contradiction.

Recall that the involution $\tau$ of $B^4$ (introduced in the discussion above \figref{ribbon}) swaps the disks $A_h$ and $B_h$.  After an isotopy, we may assume that $\tau$ leaves fixed a small $4$-ball away from $K_h$, and use this to construct an automorphism $\tau\cs\idx$ of $B^4\cs X$ that swaps $A_h$ and $B_h$ (see \secref{stabilizing}).  Now consider the composition 
$$
\rho \ = \ (\tau\cs\idx)\circ\varphi:B^4\cs X\to B^4\cs X.
$$
This diffeomorphism preserves $A_h$ (setwise) and restricts to $\tau$ on the boundary $S^3$, inducing an automorphism $\rho_*$ of $\pi_1(B^4\cs X-A_h)=\pi_1(B^4-A_h)$ (by the Seifert-Van\,Kampen Theorem) that restricts to $\tau_*$ on $\pi_1(S^3-K_h)$.  But this is impossible, as we now show.  

The first step is to construct a handle decomposition of the complement of an open tubular neighborhood $N(A_h)$ of $A_h$ in $B^4$, following the standard technique~\cite{melvin:thesis,akbulut-kirby:branch,gompf-stipsicz:book}.  Start with the picture of $K_h$ from \figref{ribbon}\,, reproduced on the left side of \figref{ribboncomplement}\,.  Note that the curves $a$ and $b$ shown there are swapped by $\tau$.  Now $B^4-N(A_h)$ is obtained from a small $4$-ball about the origin by adding an $(i+1)$-handle for each $i$-handle in $A_h$ associated to any critical level embedding, and so in our case we have two $1$-handles and one $2$-handle, as shown in the middle picture in \figref{ribboncomplement}.  This picture is redrawn after an isotopy on the right side of the figure, also tracking the $a$ and $b$ curves through this process.

\fig{70}{FigRibbonComplement}{
\put(-405,60){\small$a$}
\put(-405,5){\small$b$}
\put(-255,60){\small$a$}
\put(-253,5){\small$b$}
\put(-95,33){\small$a$}
\put(-4,1){\small$b$}
\put(-349,52){\footnotesize$h-\frac12$}
\put(-349,16){\footnotesize$h-\frac12$}
\put(-199,52){\footnotesize$h-\frac12$}
\put(-199,16){\footnotesize$h-\frac12$}
\put(-40,33){\footnotesize$2h-\frac12$}
\caption{Complement of the ribbon disk $A_h$}
\label{F:ribboncomplement}}

One can now read off a presentation of the fundamental group of the complement of $A_h$
$$
\pi_1(B^4-A_h) \ = \ \langle\, x,y \,\st\, xy = (yx)^2\,\rangle
$$
and observe that $a=xy$ and $b=1$.  By hypothesis $\rho_*(a)$ is conjugate to $b$, so $xy=1$.  But this group has a representation onto the symmetric group $S_3$ mapping $x$ to $(1\,2)$ and $y$ to $(2\,3)$, and  so $xy$ to $(1\,2\,3)$, a contradiction.   
\end{proof}

\break

\begin{remark*}\label{R:am-positron}
This fundamental group calculation is specific to the family of corks $\ch$.  In contrast, the complements of the ribbon disks associated to the positron cork $\bp$ and its generalizations $\chbar$ (given by the same picture but with $2h$ in the twist box) have fundamental group $\langle\, x,y \st xy = x^2y^2\,\rangle\,\cong\,\bz$.  These disks are topologically equivalent but smoothly distinct, and give rise to the phenomenon that Akbulut calls~\cite[\S 10.2]{akbulut:book} (see also \cite{akbulut:zeeman}\cite{akbulut:spheres}) an `anti-cork'.  We do not know if they are $1$-stably isotopic. 
\end{remark*}

\subsection*{\bf Brunnian links and the proof of \thmref{links}}\label{S:brunnian}

A non-trivial link in the sphere all of whose proper sublinks are trivial is called Brunnian~\cite{brunn,debrunner:brunnian}.  The notion has an obvious extension to links in arbitrary manifolds; one might look for families of inequivalent links with equivalent proper sublinks. As in \secref{intro}\,, we call such a family {\em Brunnian}.  Here we prove \thmref{links}\,, asserting the existence of Brunnian families of $m$-component links for any $m\ge2$ in suitable completely decomposable $4$-manifolds, using \thmref{stablysymmetric}.

We first construct Brunnian pairs $(L,M)$.  Start with a $1$-stably symmetric cork $C$ in 4-manifold $X$, and assume that $C$ is also a cork in $X\cs2\,\cptwobar$.  By the blowup formula, this is automatically the case whenever $X$ and $X^\tau$ are distinguished by Seiberg-Witten or Donaldson invariants.  The spheres $S$ and $T$ in $C^+ \subset X^+$ 
yield links $L = (S, S_2,\ldots, S_m)$ and $M = (T, S_2,\ldots, S_m)$ in $Z = X^+ \cs (m-1)\,\cptwo \cs2\,\cptwobar$, where $S_2,\ldots, S_m$ are the exceptional spheres in the $\cptwo$'s.  Evidently $L$ and $M$ are smoothly distinct, since $Z/L = X^{--}$\,, $Z/M = X^{\tau--}$, and $Z/L\not\cong Z/M$ because $C$ is a cork in $X^{--}$.  Since twisting is not seen topologically, $L$ and $M$ are topologically isotopic.

To prove the Brunnian property, it suffices by symmetry to show that the proper sublinks $L' = (S,S_2,\dots,S_{m-1})$ and $M' = (T,S_2,\dots,S_{m-1})$ are isotopic in $Z$.  Grouping the last three summands in $Z$ with $X^+$, we view $Z = X^{++--} \# (m-2)\,\cptwo$ with $S$ and $T$ lying in $X^{++--} = X^+\cs\sss\cs\cptwobar$ and $S_2,\dots,S_k$ lying in $(m-2)\cptwo$.  The $1$-stable symmetry of $C$ implies that $S$ and $T$ are isotopic in $X^{++--}$, whence $L'$ and $M'$ are isotopic in $Z$.

To get the statement of \thmref{links}, suppose first that $p=2n$ with $n\geq 2$.  If we choose $X$ to be the blown-up elliptic surface $E(n)^-$, and use the Akbulut-Mazur cork $\bw$ in the construction described above, we have $Z \cong\bx_{p+m-1,5p+2}$.  Values of $q > 5p+2$ result from blowing up $E(n)^-$ an arbitrary number of times.  If $n=1$, then we would choose $X$ to be the Dolgachev surface, and let $C$ be the positron cork $\bp$  in $X$. The construction then produces a Brunnian pair in $\bx_{m+1,11}$,  and by blowing up $X$, in $\bx_{m+1,q}$ for any $q \geq 11$.  

As in \thmref{spheres}, the fact that $\bw$  has infinitely many distinct cork embeddings in a \break 
$4$-manifold $\mathbb{X}$ with $\mathbb{X}^+\cong \mathbb{X}_{2n,10n}$ leads to infinite Brunnian families.  {\LARGE \hfill\ensuremath{\square}}  


\section{$1$-stable isotopy of diffeomorphisms and psc metrics}\label{S:diffeo}


As mentioned in the introduction, a sphere $\Sigma$ of self-intersection $\pm1$ in a $4$-manifold $Z$ gives rise to a self-diffeomorphism $\rho_\Sigma$ of $Z$ that induces `reflection in $[\Sigma]$\,' on $H_2(Z)$, and isotopic spheres yield isotopic diffeomorphisms. (This diffeomorphism is just the connected sum of the identity on the complement of the sphere with complex conjugation acting on a neighborhood of the sphere in $\pm\cptwo$.) The sequence of papers~\cite{ruberman:isotopy,ruberman:polyisotopy,ruberman:swpos} uses that observation to construct interesting self-diffeomorphisms as follows:  Start with a sphere $S$ of self-intersection $+1$ in a simply-connected $4$-manifold $X$, and consider the self-diffeomorphism 
$$
f_{\raisebox{-2pt}{$\scriptstyle S$}} = \rho_{\raisebox{-2pt}{$\scriptstyle S+E_1 + E_2$}}  \circ \rho_{\raisebox{-2pt}{$\scriptstyle S-E_1 + E_2$}}
$$
of $Z = X\cs2\,\cptwobar$, where $E_1$ and $E_2$ are the exceptional spheres in the $\cptwobar$ factors.  Here the $\pm$ signs indicate connected sums of the spheres, preserving or reversing orientation as appropriate.  Since these spheres have simply-connected complements, the sums are well-defined by \aref{welldefined}.   Now suppose that $T$ is another sphere of self-intersection $+1$ in $X$, and that $b_2^+(X)$ is even and at least $4$.  If $S$ and $T$ are homologous, then the diffeomorphisms $f_S$ and $f_T $ are topologically isotopic \cite{perron:isotopy2,quinn:isotopy}, but it is shown in \cite{ruberman:isotopy} that they are not {\em smoothly} isotopic if $X/S$ and $X/T$ are distinguished by some Seiberg-Witten or Donaldson invariant.  

As we have seen above, we can choose the manifolds $X$ and $Z$ amongst the $\xpq$, and (with suitable restrictions on $p$ and $q$) get infinitely many spheres, resulting in infinitely many isotopy classes of diffeomorphisms.  \thmref{diffs} asserts that the resulting diffeomorphisms are all isotopic after a single stabilization.  We actually prove a somewhat sharper statement, which concerns the behavior of the group $\pi_0(\diff(M))$ of isotopy classes of (orientation preserving) diffeomorphisms of $M$ under stabilization. The issue is that connected sum is not, {\em a priori}, a well-defined operation on isotopy classes of self-diffeomorphisms.  In a technical section that follows this one, we will show that that for a simply-connected $M^4$, connected sum with the identity on $\pm \cptwo$ or $\sss$ is in fact well-defined.  As a corollary, we find a well-defined homomorphism $\Phi: \pi_0(\diff(M)) \to  \pi_0(\diff(M\cs\sss)$, and prove via \thmref{diffs} that its kernel is infinite.  

\begin{proof}[Proof of {\rm \thmref{diffs}}]
For even $p\ge4$ and $q\ge 5p+2$, the proof of \thmref{spheres} yields an infinite family of topologically isotopic spheres $S_i$ of self-intersection $+1$ in $\bx_{p,q-2}$ that are not smoothly isotopic, distinguished by the Seiberg-Witten invariants of their blow-downs $\bx_{p,q-2}/S_i$, but that become smoothly isotopic in $\bx_{p,q-2}\cs\sss \cong \bx_{p+1,q-1}$.  The results of \cite{ruberman:isotopy} (discussed above) show that the self-diffeomorphisms $f_{S_i}:\xpq\to\xpq$ are pairwise non-isotopic.  Since the $S_i$ are isotopic in $\bx_{p+1,q-1}$, the spheres $S_i\pm E_1+E_2$, and consequently the diffeomorphisms $f_{S_i}$, are isotopic in $\bx_{p+1,q+1} \cong \xpq\cs\sss$, as asserted.
\end{proof}

We turn now to the proof of \thmref{psc}\,.  The $\xpq$ admit metrics of positive scalar curvature (PSC) constructed as connected sums~\cite{gromov-lawson:psc,schoen-yau:psc} of standard metrics on $\pm\cptwo$. For a generic PSC metric $g_{p,q}$ on $\xpq$, it was shown in~\cite{ruberman:swpos} that the metrics $f^*_{S_i} g_{p,q}$ are mutually non-isotopic. We want to show that these become isotopic after a single stabilization.  

The notion of stabilization of metrics runs into the same issue of well-definedness (even up to isotopy) as we encountered in the case of diffeomorphisms.  In order to perform a connected sum of PSC metrics on manifolds $X$ and $Y$ one typically deforms the metrics (maintaining the PSC condition) in a neighborhood of points $x$ in $X$ and $y$ in $Y$ into a standard form (a `torpedo metric' \cite{walsh:cobordism}) after which they can be glued by identifying $X$ and $Y$ away from $x$ and $y$.  The issue of whether the resulting metric on $X\cs Y$ is well-defined (independent up to isotopy of all choices in the construction) can be addressed in a fashion similar to Section~\ref{S:stabilizing}\,, using techniques in~\cite{walsh:psc-sphere}.  However, to avoid an unnecessary technical discussion, we fix a PSC metric on $\sss$ containing a torpedo region, and a metric $g_{p,q}$ on each $\xpq$ with such a region disjoint from the spheres used to make the diffeomorphisms $f_{S_i}$.  As explained in the introduction, \thmref{psc} then follows from \thmref{diffs}.


\subsection*{\bf Stabilizing diffeomorphisms}\label{S:stabilizing}

We now address the technical issue of well-definedness of stabilization of diffeomorphisms under connected sum. A basic theorem in differential topology~\cite[Chapter 8]{hirsch}, going back to Palais~\cite{palais:disk} states that any two orientation preserving embeddings of an $n\textup{-disk}$ $D$ in an oriented $n$-manifold $M$ are smoothly isotopic.  (We assume throughout that all manifolds are connected.) This implies that the connected sum of oriented manifolds is well-defined up to diffeomorphism.

Now, given two diffeomorphisms $f:M \to M$ and $f':M' \to M'$, each restricting to the identity on embedded disks $D$ and $D'$, we can form the connected sum 
$$
f \cs f': M\cs M' \to M\cs M'
$$
as follows.  In the interior of each of $D$ and $D'$, fix smaller disks $\halfD$ and $\halfD'$, and specify 
$$
M\cs M' \ = \ (M - \halfD) \ \cup_{\partial}\  (M'-\halfD')
$$
with $f\cs f'$ defined to restrict to $f$ and $f'$ respectively on the two halves.  Note that $f\cs f'$ depends, up to isotopy, only on $f$ and $f'$ considered up to isotopy relative to $\halfD$ and $\halfD'$.

Suppose now that $f$ is an arbitrary orientation-preserving diffeomorphism and $f'= \idmprime$. By Palais' theorem, $f$ can be isotoped to a diffeomorphism $g$ that is the identity on a disk, and it might seem that we can define $f \cs f'$ to be $g \cs f'$.  We would like to say that this is a well-defined operation on isotopy classes; in other words, that it does not depend on the choice of isotopy between $f$ and $g$.  This is a well-known issue in the study of the mapping class group of surfaces, where there are stabilization results for mapping class groups of diffeomorphisms that are the identity on the boundary~\cite{harer:stability}, but not for the mapping class groups of closed surfaces.

For example, take $M$ and $M'$ to be genus $2$ surfaces, with $f = \idm$ and the isotopy from $f$ to $g$ to be one that drags a disk around an essential closed loop $\gamma \subset M$.  Then $f \cs f'$ would be represented by a composition of Dehn twists (with opposite senses) around a pair of homologous but non-isotopic curves, and hence would not be isotopic to the identity.  Similarly, one might vary $f$ by an isotopy from the identity to itself that rotates the disk $D$ by a non-trivial element of $\pi_1(SO(2))$.  Again, standard examples show that this operation can change the connected sum of diffeomorphisms by a Dehn twist around a separating curve on a surface.

In contrast, we will show that for simply-connected $4$-manifolds, there is a well-defined (up to isotopy) assignment $(f,f') \mapsto f\cs f'$ when $M'$ is $\pm\cptwo$ or $\sss$ and $f'$ is the identity diffeomorphism.  First, we collect some standard results about diffeomorphism groups, using the following notation.  For the moment $M$ will denote an oriented closed smooth $n$-manifold with a fixed embedding of an $n$-disk $D$. Then we have the following spaces, each with the $\calc^\infty$ topology:
\begin{itemize}
\smallskip
\item $\diff(M)$ is the group of orientation-preserving diffeomorphisms of $M$
\smallskip
\item $\diff_D(M)$ consists of diffeomorphisms whose restriction to $D$ is the identity
\smallskip
\item $\emb(D,M)$ is the space of orientation preserving embeddings of $D$ in $M$
\end{itemize}
\smallskip
Finally, $\calf(M)$ will denote the bundle of oriented (not necessarily orthonormal) frames in the tangent bundle.  As such, it is a fiber bundle over $M$ with fiber $GL_n^+(\br)$
The following facts are well-known to experts; the first is proved using parameterized versions of the isotopy extension and tubular neighborhood
theorems~\cite{cerf:embeddings,hirsch}, while the second is a parameterized version of the Cerf-Palais disk embedding theorem~\cite{cerf:embeddings,palais:disk}

\begin{lemma}\label{L:diffs} With the notation above,
\\[3pt]
\indent {\bf a)} Restricting a diffeomorphism to $D$ defines a fiber bundle $\diff(M) \to \emb(D,M)$ 
\\[1pt]
\indent \quad\  with fiber $\diff_D(M)$
\\[3pt]
\indent {\bf b)} The differential at $0 \in D$ of an embedding defines a homotopy equivalence 
\\[1pt] 
\indent \quad\ $\emb(D,M) \to \calf(M)$.
\end{lemma}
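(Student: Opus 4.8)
\textbf{Proof plan for Lemma~\ref{L:diffs}.}

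The plan is to prove both statements as consequences of standard parametrized isotopy/tubular-neighborhood machinery, treating them as instances of the Cerf--Palais theory applied to families rather than single maps. For part \textbf{a)}, I would first observe that the restriction map $r\colon \diff(M)\to\emb(D,M)$, $\varphi\mapsto\varphi|_D$, is well-defined and continuous in the $\calc^\infty$ topologies, and that its fiber over the inclusion $D\hookrightarrow M$ is exactly $\diff_D(M)$. To see that $r$ is a fiber bundle (in particular a fibration with the stated fiber), I would invoke the parametrized isotopy extension theorem: given a smooth family of embeddings $\{e_t\colon D\to M\}_{t\in P}$ over a parameter space $P$ with $e_{t_0}$ the standard inclusion, one produces a smooth family of ambient diffeomorphisms $\{\Psi_t\in\diff(M)\}$ with $\Psi_{t_0}=\mathrm{id}$ and $\Psi_t\circ(\text{incl})=e_t$; this is exactly the local section data needed to trivialize $r$ over a neighborhood of any point in $\emb(D,M)$, and it exhibits $r$ as a locally trivial bundle with structure group acting by left translation and fiber $r^{-1}(\text{incl})=\diff_D(M)$. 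I would cite Cerf~\cite{cerf:embeddings} and Hirsch~\cite{hirsch} (Chapter 8) for the parametrized versions of these theorems.

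For part \textbf{b)}, I would factor the differential-at-$0$ map as $\emb(D,M)\xrightarrow{\;d_0\;}\calf(M)$, $e\mapsto d_0e$, and argue it is a homotopy equivalence by a two-step reduction. First, restriction to a smaller concentric disk (or equivalently germification at $0$, or shrinking via the Alexander-type trick) shows that $\emb(D,M)$ is homotopy equivalent to the space of embeddings of arbitrarily small disks, hence to the space of embedding-germs at points of $M$. Second, the ordinary (unparametrized) Cerf--Palais disk theorem~\cite{palais:disk,cerf:embeddings} says any two orientation-preserving disk embeddings near a point are isotopic; its parametrized form, combined with the fact that the space of linear embeddings $(D,0)\to(\br^n,0)$ deformation retracts onto $GL_n^+(\br)$, upgrades this to the statement that $d_0$ is a weak equivalence onto each fiber $GL_n^+(\br)$ of $\calf(M)\to M$. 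A clean way to package this: both $\emb(D,M)\to M$ (evaluation at $0$) and $\calf(M)\to M$ are fibrations over $M$, the map $d_0$ covers the identity on $M$, and it is a fiberwise weak homotopy equivalence by the parametrized disk theorem; hence $d_0$ is itself a weak homotopy equivalence, and since all spaces in sight have the homotopy type of CW complexes (spaces of smooth maps between compact manifolds, or manifolds with boundary, in the $\calc^\infty$ topology), Whitehead's theorem promotes this to a genuine homotopy equivalence.

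I expect the main obstacle to be the careful verification that the relevant maps are honest fiber bundles (or at least Serre fibrations) rather than merely surjective submersions of infinite-dimensional spaces: one must check the local triviality/path-lifting carefully using the parametrized isotopy extension theorem with sufficient smoothness control, and must be slightly careful about whether one works with $\diff(M)$ of a closed manifold or with the disk-fixing variant, and about the compact-open $\calc^\infty$ topology. Everything else is a matter of assembling known results. Since these facts are, as the text says, well-known to experts, I would keep the argument brief, cite~\cite{cerf:embeddings,palais:disk,hirsch}, and emphasize only the fibration structure in \textbf{a)} and the fiberwise-equivalence-over-$M$ argument in \textbf{b)}.
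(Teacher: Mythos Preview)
Your proposal is correct and matches the paper's approach exactly: the paper does not give a detailed proof but simply states that part \textbf{a)} follows from the parametrized isotopy extension and tubular neighborhood theorems (citing Cerf and Hirsch) and that part \textbf{b)} is a parametrized version of the Cerf--Palais disk embedding theorem (citing Cerf and Palais). Your write-up fills in precisely the details one would expect behind those citations, so there is nothing to add or correct.
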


\vskip-10pt

An element in $\pi_1(SO(n))$, viewed as a smooth path beginning and ending at the identity, produces a diffeomorphism of $S^{n-1} \times I$.  If $S^{n-1} \times I$ is embedded in $M$, then this diffeomorphism extends naturally to $M$; we refer to such a diffeomorphism as a {\em Dehn twist}.  

\vskip-10pt

\begin{proposition}\label{P:independent} Let $M$ be a simply-connected $n$-manifold, and fix disks $\halfD \subset D\subset M$. Let $f:M\to M$ be an orientation-preserving diffeomorphism.   Then there a diffeomorphism $g: M\to M$ isotopic to $f$ that is the identity on $D$.  Any two choices of $g$ differ up to isotopy relative to $\halfD$ by composition of Dehn twists supported in $D - \textup{int}(\halfD)$.
\end{proposition}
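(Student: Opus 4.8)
The plan is to deduce both assertions from \lemref{diffs} by elementary fibration arguments; the only genuinely delicate point is the explicit identification of a Dehn twist at the end. For the existence of $g$, write $\iota\colon D\hookrightarrow M$ for the inclusion. Both $f\circ\iota$ and $\iota$ lie in $\emb(D,M)$ (recall $f$ is orientation preserving), and $\emb(D,M)$ is path connected because by \lemref{diffs}(b) it is homotopy equivalent to the frame bundle $\calf(M)$, which is connected ($M$ and $GL_n^+(\br)$ being connected). I would pick a path in $\emb(D,M)$ from $f\circ\iota$ to $\iota$ and lift it, via the fibration $\diff(M)\to\emb(D,M)$ of \lemref{diffs}(a), to a path $F_t$ in $\diff(M)$ with $F_0=f$; then $g:=F_1$ restricts to the identity on $D$, and $F_t$ is the desired isotopy.

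For the uniqueness clause, let $g_1,g_2$ be two such choices and put $\phi=g_1\circ g_2^{-1}$. Then $\phi$ restricts to the identity on $D$, hence on $\halfD$, and since $g_1$ and $g_2$ are each isotopic to $f$ — so to one another — in $\diff(M)$, the diffeomorphism $\phi$ is isotopic to $\idm$ through diffeomorphisms of $M$. Applying \lemref{diffs}(a) with $\halfD$ in place of $D$ gives a fibration $\diff_{\halfD}(M)\to\diff(M)\to\emb(\halfD,M)$, and exactness of its homotopy sequence at $\pi_0(\diff_{\halfD}(M))$ shows that $[\phi]$, which lies in the kernel of $\pi_0(\diff_{\halfD}(M))\to\pi_0(\diff(M))$, equals $\partial[\gamma]$ for some $[\gamma]\in\pi_1(\emb(\halfD,M))$. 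By \lemref{diffs}(b) this group is $\pi_1(\calf(M))$, and the homotopy sequence of the frame bundle $GL_n^+(\br)\to\calf(M)\to M$, together with $\pi_1(M)=0$, shows the fibre inclusion induces a surjection from $\pi_1(SO(n))=\pi_1(GL_n^+(\br))$ onto $\pi_1(\calf(M))$. So I would choose a loop $R_t\in SO(n)$, $t\in[0,1]$, $R_0=R_1=I$, representing a class mapping to $[\gamma]$; then the loop $\gamma_t:=\iota\circ R_t$ in $\emb(\halfD,M)$ — which rotates the image disk $\iota(\halfD)$ — also represents $[\gamma]$, since its image under the homotopy equivalence of \lemref{diffs}(b) is the loop $t\mapsto d\iota|_0\circ R_t$ in the fibre of $\calf(M)$ over $\iota(0)$.

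To evaluate $\partial[\gamma]$ I would fix a concentric disk $\halfD^+$ with $\halfD\subset\textup{int}(\halfD^+)\subset\textup{int}(D)$, identify $\halfD^+-\textup{int}(\halfD)$ with a collar $\partial\halfD\times[0,1]$ (with $\partial\halfD$ at level $0$), choose a smooth $\alpha\colon[0,1]\to[0,1]$ equal to $1$ near $0$ and to $0$ near $1$, and define a path $\Psi_t$ in $\diff(M)$ equal to the rotation $R_t$ on $\halfD$, to $(\theta,s)\mapsto(R_{t\alpha(s)}\theta,s)$ on the collar, and to $\idm$ on $M-\halfD^+$. One checks this is a smooth isotopy with $\Psi_0=\idm$ whose restriction to $\halfD$ is $\gamma_t$; hence it lifts $\gamma_t$ and $\partial[\gamma]=[\Psi_1]$ in $\pi_0(\diff_{\halfD}(M))$. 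Since $R_1=I$, the diffeomorphism $\tau:=\Psi_1$ is the identity off the collar and is $(\theta,s)\mapsto(R_{\alpha(s)}\theta,s)$ on it — precisely the Dehn twist (in the sense defined just above) associated to the loop $s\mapsto R_{\alpha(s)}$ in $SO(n)$, supported in $\halfD^+-\textup{int}(\halfD)\subset D-\textup{int}(\halfD)$. Thus $\phi$ is isotopic rel $\halfD$ to $\tau$ (a composition of such Dehn twists if one decomposes $[R]$ as a multiple of a generator of $\pi_1(SO(n))$), and composing this isotopy on the right with $g_2$, which fixes $\halfD$, shows $g_1$ is isotopic rel $\halfD$ to $\tau\circ g_2$, as claimed.

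I expect the exact-sequence bookkeeping and the identifications $\pi_1(\emb(\halfD,M))\cong\pi_1(\calf(M))$ and $\pi_1(SO(n))\twoheadrightarrow\pi_1(\calf(M))$ to go through mechanically once \lemref{diffs} is granted. The one step that needs real care is the final computation: arranging the representative loop $\gamma_t$ and its lift $\Psi_t$ so that the terminal diffeomorphism is literally a Dehn twist \emph{and} is supported inside the prescribed buffer region $D-\textup{int}(\halfD)$ rather than in a collar lying outside $D$. Running the rotation argument with the smaller disk $\halfD$ and damping the rotation outward into $D-\textup{int}(\halfD)$ is what makes this work, and throughout one must keep the `rel $\halfD$' bookkeeping matched to the fibration $\diff_{\halfD}(M)\to\diff(M)\to\emb(\halfD,M)$.
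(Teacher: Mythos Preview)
Your proposal is correct and follows essentially the same route as the paper: both arguments use the homotopy exact sequence of the fibration $\diff_{\halfD}(M)\to\diff(M)\to\emb(\halfD,M)$ from \lemref{diffs}(a), identify $\pi_1(\emb(\halfD,M))\cong\pi_1(\calf(M))$ via \lemref{diffs}(b), and invoke the surjection $\pi_1(SO(n))\twoheadrightarrow\pi_1(\calf(M))$ coming from $\pi_1(M)=0$. Your write-up is considerably more explicit than the paper's --- the paper simply asserts that the obstruction living in $\pi_1(\calf(M))$ ``implies the last statement'', whereas you actually compute the boundary map on a rotation loop $\gamma_t=\iota\circ R_t$ by constructing the damped lift $\Psi_t$ and identifying $\Psi_1$ as a Dehn twist supported in $D-\textup{int}(\halfD)$.
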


The composition could mean no Dehn twists, in which case the two choices are isotopic.  We remind the reader that for $n\geq 3$, since $\pi_1(SO(n)) \cong \bz_2$, the composition of such a Dehn twist with itself is isotopic to the identity.

\vskip-10pt
\vskip-10pt

\begin{proof}
The existence of such a diffeomorphism $g$ follows, as mentioned above, from an isotopy of $D$ to $f(D)$.  Two choices of $g$ defined by such isotopies differ (up to isotopy of $M$) by an element of $\pi_1(\diff(M))$, which is of course an isotopy of $\idm$ to itself.  Combine the two parts of Lemma~\ref{L:diffs} and consider the homotopy long exact sequence of the fibration:
$$
\xymatrix{
 \pi_1(\diff_{\halfD}(M)) \ar[r] & \pi_1(\diff(M)) \ar[r] 
&  \pi_1(\emb(\halfD,M)) \ar[d]^\cong  \ar[r] & \pi_0(\diff_{\halfD}(M))  \\
&&  \pi_1(\calf(M)) &
}
$$

From this we see  that the obstruction to lifting this to an isotopy of $\idm$ to itself, relative to $\halfD$, comes from an element of $\pi_1(\calf(M))$.  But since $M$ is simply-connected, there is a surjection $\pi_1(SO(n)) \to \pi_1(\calf(M))$.  This implies the last statement of the proposition.
\end{proof}

\begin{theorem}\label{T:well-def}
Let $f:M\to M$ be an orientation-preserving diffeomorphism of a simply-connected $4$-manifold.  Then for $X = \pm\cptwo$ or $\sss$, there is a well-defined stabilization $f \cs \idx$ that depends up to isotopy only on the isotopy class of $f$.
\end{theorem}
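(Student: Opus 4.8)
The plan is to reduce the claim to Proposition~\ref{P:independent} together with the observation that the ambiguous Dehn twists become trivial once $X = \pm\cptwo$ or $\sss$ is summed on. By Proposition~\ref{P:independent}, any two diffeomorphisms $g,g'$ isotopic to $f$ and equal to the identity on a fixed disk $D \subset M$ differ, up to isotopy rel a smaller disk $\halfD$, by a composition of Dehn twists supported in the collar $D - \textup{int}(\halfD)$. Hence to see that $f\cs\idx$ is well-defined up to isotopy, it suffices to show that performing the connected sum with $(X,\idx)$ along $\halfD$ kills the effect of such a Dehn twist --- that is, that a Dehn twist supported in an embedded $S^{n-1}\times I \subset M\cs X$ that links the connect-sum region is isotopic to the identity in $M\cs X$. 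Since $\pi_1(SO(4))\cong\bz_2$, there is really only the single nontrivial Dehn twist to consider, so the whole question is whether that one twist dies.

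The key geometric input is that each of $\pm\cptwo$ and $\sss$ contains an embedded $2$-sphere $\Sigma$ of square $\pm1$ or $\pm2$ whose normal disk bundle gives a compressing region for the twisting sphere, and moreover that $\pi_1$ of the relevant diffeomorphism group is killed. Concretely, I would argue as follows. First, I would observe that the Dehn twist in $S^3\subset M\cs X$ around the connect-sum sphere, viewed as an element of $\pi_1(\diff(X,\halfD'))$ via the same fibration argument as in Proposition~\ref{P:independent}, is the image of the generator of $\pi_1(SO(4))\to\pi_1(\calf(X))$. So it is enough to show that this map is zero for $X=\pm\cptwo$ and for $X=\sss$; equivalently, that the frame bundle of $X$ has a section over the $2$-skeleton, or more simply that the tangent bundle of $X$ restricted to its $2$-skeleton is trivial with a preferred trivialization that propagates across the relevant $2$-sphere. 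For $\pm\cptwo$ and $\sss$ this is a direct check: the second Stiefel--Whitney class and the obstruction in $\pi_1(SO(4))$ along the generating $2$-spheres can be computed from the known tangent bundle data (e.g. $T\cptwo\oplus\underline{\br}\cong 3\overline{H}$ type relations, and for $\sss$ the tangent bundle is stably trivial), and the twist is seen to bound. A cleaner way to phrase the same point: the diffeomorphism $\rho_\Sigma$ of $X$ built from a square-$\pm1$ (or $\pm2$) sphere $\Sigma$ --- complex conjugation on a neighborhood of $\Sigma$, identity elsewhere --- provides an explicit loop in $\diff(X)$ that conjugates away the Dehn twist, so the twist is isotopically absorbed.

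Having established that the stabilization does not depend on the choice of $g$, the remaining points are routine: one checks that the construction $f\mapsto f\cs\idx$ is insensitive to the choice of $\halfD\subset D$ (two such choices are related by an ambient isotopy fixing $f$ outside a disk, by the Cerf--Palais theorem, Lemma~\ref{L:diffs}b), and that isotopic $f$'s give isotopic $f\cs\idx$'s (an isotopy of $f$ can be combined with Proposition~\ref{P:independent} applied in families). I would then remark that the same discussion shows $f\cs\idx$ is compatible with composition, so that $\Phi:\pi_0(\diff(M))\to\pi_0(\diff(M\cs\sss))$ is a well-defined homomorphism, which is what is needed for the applications.

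The main obstacle is the second step --- verifying that the nontrivial Dehn twist is null-isotopic in $M\cs\cptwo$ and in $M\cs\sss$. Everything else is a formal consequence of Proposition~\ref{P:independent} and the parameterized disk-embedding results already quoted. The reason this step is delicate is exactly the cautionary surface examples discussed just before the theorem: for a general summand the twist need not die, so one genuinely uses special features of $\cptwo$ and $\sss$ (their tangent bundles restricted to the $2$-skeleton, equivalently the existence of low-square spheres absorbing the twist). I expect the cleanest route is the explicit one via $\rho_\Sigma$, avoiding any delicate computation in $\pi_1(\diff)$.
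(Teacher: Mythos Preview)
Your reduction to Proposition~\ref{P:independent} together with the triviality of the neck Dehn twist is correct and is exactly the paper's framework. The gap is in the step you yourself flag as the main obstacle.

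In approach (a) you assert that the fiber inclusion $i_*\colon \pi_1(SO(4))\to\pi_1(\calf(X))$ vanishes for all three choices of $X$. From the homotopy exact sequence of $SO(4)\to\calf(X)\to X$ with $X$ simply-connected, one has $i_*=0$ if and only if the connecting map $\pi_2(X)\to\pi_1(SO(4))\cong\bz_2$ is onto, i.e.\ if and only if $w_2(X)\neq 0$. This holds for $\pm\cptwo$ but \emph{fails} for $\sss$, which is spin: there $i_*$ is an isomorphism onto $\bz_2$. (In fact the condition you quote --- a section of the frame bundle over the $2$-skeleton --- is precisely $w_2=0$, the \emph{opposite} of $i_*=0$; your ``$T(\sss)$ stably trivial'' observation confirms that $\sss$ is spin, hence that your criterion does not apply.) Approach (b) does not repair this as stated: the reflection $\rho_\Sigma$ is a single diffeomorphism acting nontrivially on $H_2$, not a loop in $\diff(X)$ based at the identity, so it does not exhibit the Dehn twist as lying in the image of a connecting homomorphism.

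What is actually required is not $i_*(\alpha)=0$ but the weaker condition that $i_*(\alpha)$ lie in the image of $\pi_1(\diff(X))\to\pi_1(\calf(X))$; exactness then forces the Dehn twist $\partial(i_*(\alpha))$ to vanish in $\pi_0(\diff_{\halfD'}(X))$. The paper supplies this uniformly by exhibiting, for each $X$, a smooth $S^1$-action with a fixed point $p$ whose isotropy representation $S^1\to SO(4)$ on $T_pX$ generates $\pi_1(SO(4))$. Such actions are easy to write down (for $\sss$, rotate the two sphere factors at speeds with odd sum), and the resulting loop in $\diff(X)$ yields an explicit isotopy, supported in the $X$-summand, from the neck Dehn twist to the identity. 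This handles the spin case $\sss$ where your tangent-bundle argument cannot.
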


\vskip-10pt
\vskip-10pt
\vskip-10pt

\begin{proof}
As suggested above, we isotope $f$ to a diffeomorphism that is the identity on an embedded disk $D$, and use it to define $f \cs \idx$.  By Proposition~\ref{P:independent} the resulting diffeomorphism of $M\cs X$ is well-defined, possibly up to Dehn twists supported on an $S^3 \times I$ region separating the two summands. We claim (compare~\cite[Theorem 2.4]{giansiracusa:mcg}) that for $X$ as indicated in the theorem, such a Dehn twist is isotopic to the identity.  To see this, consider an $S^1$ action on $X$ with a fixed point $p$; assume for convenience that it acts isometrically.   The local representation of $S^1$ on $T_pX$ defines a map $S^1 \to SO(4)$.  In the case of $\pm\cptwo$ and $\sss$, there is an action such that this map is the generator of $\pi_1(SO(4))$.  Using this action, it is easy to find an isotopy, supported on $X$, between a Dehn twist and the identity.
\end{proof}

\thmref{well-def} can be rephrased in terms of the group $\diff(M)$ of (orientation-preserving) diffeomorphisms of $M$. 

\begin{corollary}\label{C:pi0}
Let $M^4$ be simply-connected.  Stabilization with the identity diffeomorphism of $X= \pm\cptwo$ or $\sss$ gives a well-defined homomorphism 
$$
\Phi: \pi_0(\diff(M)) \to \pi_0(\diff(M \# X)).
$$
For $M = \xpq$ as in \thmref{diffs} and $X = \sss$, the kernel of $\Phi$ is infinite.
\end{corollary}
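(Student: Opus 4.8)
The plan is to read the statement off directly from \thmref{diffs} together with the fact, just established in the first part of this corollary, that $\Phi$ is a well-defined group homomorphism. Recall that for $p\ge4$ even and $q\ge5p+2$ the proof of \thmref{diffs} produces an infinite family $\{f_k:\xpq\to\xpq\mid k\ge0\}$ of self-diffeomorphisms that are pairwise \emph{not} smoothly isotopic, yet all of whose stabilizations $f_k\cs\idsss$ are smoothly isotopic in $\xpq\cs\sss$. Working in the group $\pi_0(\diff(\xpq))$ under composition, set
$$
g_k \ = \ f_k\circ f_0^{-1}:\xpq\to\xpq,\qquad\text{so that}\qquad [g_k]\ =\ [f_k]\,[f_0]^{-1}.
$$

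First I would verify that $[g_k]\in\ker\Phi$ for every $k$. Since $\Phi$ is a homomorphism,
$$
\Phi([g_k]) \ = \ \Phi([f_k])\,\Phi([f_0])^{-1} \ = \ [\,f_k\cs\idsss\,]\,[\,f_0\cs\idsss\,]^{-1},
$$
and this is the identity precisely because $f_k\cs\idsss$ and $f_0\cs\idsss$ are isotopic in $\xpq\cs\sss$, which is the content of \thmref{diffs}. Thus $k\mapsto[g_k]$ maps $\{k\ge0\}$ into $\ker\Phi$. Next I would check that this map is injective: if $[g_k]=[g_j]$, then $[f_k][f_0]^{-1}=[f_j][f_0]^{-1}$ in the group $\pi_0(\diff(\xpq))$, so right-multiplying by $[f_0]$ gives $[f_k]=[f_j]$, i.e.\ $f_k$ and $f_j$ are smoothly isotopic; since the $f_k$ are pairwise non-isotopic, $k=j$. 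Hence $\{[g_k]\mid k\ge0\}$ is an infinite subset of $\ker\Phi$.

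As this makes clear, there is no genuine obstacle internal to the argument; the entire weight rests on the two prerequisites already in hand. The hard differential-topological input is \thmref{diffs} itself --- equivalently, the gauge-theoretic detection of \cite{ruberman:isotopy,ruberman:swpos} showing that the $f_k$ are pairwise non-isotopic while becoming isotopic after a single stabilization. The hard point-set input is \thmref{well-def} together with the first assertion of this corollary, which is exactly what licenses the factorization $\Phi([g_k])=\Phi([f_k])\Phi([f_0])^{-1}$; without well-definedness of $f\mapsto f\cs\idsss$ on isotopy classes, and of its multiplicativity, that computation would be meaningless. The only point one must be slightly careful about within this proof is the routine bookkeeping that $\pi_0(\diff(\xpq))$ is a group under composition and that $\Phi$ respects that structure, both of which follow from the constructions of the preceding section.
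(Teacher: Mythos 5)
Your argument for the infinite kernel is essentially the paper's: form $f_k \circ f_0^{-1}$, observe via the homomorphism property that these land in $\ker\Phi$ since \thmref{diffs} makes all $f_k\cs\idsss$ isotopic, and use pairwise non-isotopy of the $f_k$ for injectivity. (The paper uses $f_{S_i}\circ(f_{S_1})^{-1}$ for $i\geq 2$, a cosmetic difference.)

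The gap is that you treat the homomorphism property of $\Phi$ as ``routine bookkeeping'' already in hand, when in fact the corollary's proof needs to establish it and \thmref{well-def} only gives well-definedness of the \emph{map} $\Phi$, not multiplicativity. The multiplicativity requires a short but genuine argument that the paper supplies: given $f_0,g_0\in\diff(M)$, choose isotopies $f_t,g_t$ to diffeomorphisms $f_1,g_1$ that are the identity on a fixed disk $D$; then $f_t\circ g_t$ is an isotopy from $f_0\circ g_0$ to $f_1\circ g_1$, which is also the identity on $D$, and one checks directly that $(f_1\circ g_1)\cs\idx=(f_1\cs\idx)\circ(g_1\cs\idx)$. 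This uses the specific way $\Phi$ is computed via representatives that are the identity on $D$, and it is exactly what licenses the factorization $\Phi([g_k])=\Phi([f_k])\Phi([f_0])^{-1}$ on which your entire computation rests. Without spelling this out, the ``routine bookkeeping'' remark is not a proof.
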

\begin{proof}
That $\Phi$ is well-defined is the content of \thmref{well-def}.  Recall that the multiplication in $\pi_0(G)$ for any topological group $G$ is induced from the multiplication in $G$.  Now given $f_0$ and $g_0$ in $\diff(M)$, we choose isotopies $f_t,\, g_t$ with $f_1,\, g_1$ the identity on an embedded disk $D$. Then $f_t \circ g_t$ gives an isotopy of $f_0 \circ g_0$ to $f_1 \circ g_1$, which is evidently also the identity on  $D$.  Since 
$$
(f_1 \circ g_1) \cs \rm{id}_{X} = \left(f_1  \cs \rm{id}_{X}\right) \circ \left(g_1\cs \rm{id}_{X}\right)
$$
we see that $\Phi([f_0] [g_0]) = \Phi([f_1] [g_1]) =  \Phi([f_1]) \Phi([g_1])$, so that $\Phi$ is a homomorphism.  Using the same notation as in the proof of \thmref{diffs}, we see that for $i \geq 2$, the diffeomorphisms $f_{S_i} \circ (f_{S_1})^{-1}$ form an infinite subset in the kernel of $\Phi$.  
\end{proof}

\affiliationone{
   Dave Auckly \\
   Department of Mathematics \\
   Kansas State University \\  
   Manhattan, KS 66506 \ USA \\
   \email{dav@math.ksu.edu}}
\affiliationtwo{
   Hee Jung Kim \\
   Department of Mathematical Sciences \\
   Seoul National University \\  
   Seoul, South Korea \\
   \email{heejungorama@gmail.com}}   
\affiliationthree{
   Paul Melvin \\
   Department of Mathematics \\
   Bryn Mawr College \\  
   Bryn Mawr, PA 19010 \ USA \\
   \email{pmelvin@brynmawr.edu}}   
\affiliationfour{
   Daniel Ruberman \\
   Department of Mathematics \\
   Brandeis University \\  
   Waltham, MA 02454 \ USA \\
   \email{ruberman@brandeis.edu}}   

\end{document}